\newcommand{\ct}{c(\theta)}
\newcommand{\ctg}{c(\theta,\gamma)}
\newcommand\myeq{\mathrel{\overset{\makebox[0pt]{\mbox{\normalfont\tiny\sffamily def}}}{=}}}
\newtheorem{thm}{Theorem}[section]
\newtheorem{prop}[thm]{Proposition}
\newtheorem{lemma}[thm]{Lemma}
\newtheorem*{lemma*}{Lemma}
\theoremstyle{definition}
\theoremstyle{remark}
\newtheorem{remark}[thm]{Remark}
\newcommand{\rank}{\mathop{\textrm{rank}}}
\newcommand{\dist}{\mathop{\textrm{dist}}}
\begin{document}

\global\long\def\bbr{\mathbb{R}}
\global\long\def\bbz{\mathbb{Z}}
\global\long\def\bbn{\mathbb{N}}
\global\long\def\bbq{\mathbb{Q}}
\global\long\def\calc{\mathcal{C}}
\global\long\def\calb{\mathcal{B}}
\global\long\def\cali{\mathcal{I}}
\global\long\def\calj{\mathcal{J}}
\global\long\def\sep{\;:\;}
\global\long\def\sub{\subseteq}
\global\long\def\eps{\varepsilon}
\global\long\def\mto{\xrightarrow[m\to\infty]{}}
\global\long\def\lto{\xrightarrow[\ell\to\infty]{}}
\global\long\def\hto{\xrightarrow[h\to\infty]{}}
\global\long\def\bt{\mathbf{t}}
\global\long\def\be{\mathbf{e}}
\global\long\def\bg{\mathbf{g}}
\global\long\def\bw{\mathbf{w}}
\global\long\def\bb{\mathbf{b}}
\global\long\def\bp{\mathbf{p}}
\global\long\def\bq{\mathbf{q}}
\global\long\def\bm{\mathbf{m}}
\global\long\def\bn{\mathbf{n}}
\global\long\def\br{\mathbf{r}}
\global\long\def\bv{\mathbf{v}}
\global\long\def\bu{\mathbf{u}}
\global\long\def\bx{\mathbf{x}}
\global\long\def\by{\mathbf{y}}
\global\long\def\bz{\mathbf{z}}
\global\long\def\bi{\mathbf{i}}
\global\long\def\bh{\mathbf{h}}
\global\long\def\bg{\mathbf{g}}
\global\long\def\btheta{\boldsymbol{\theta}}
\global\long\def\bphi{\boldsymbol{\varphi}}
\global\long\def\bgamma{\boldsymbol{\gamma}}
\global\long\def\bell{\boldsymbol{\ell}}
\global\long\def\bzero{\mathbf{0}}
\global\long\def\bbf{\mathbf{\mathbb{F}}}
\global\long\def\RR{\mathbf{\mathbb{R}}}
\global\long\def\QQ{\mathbf{\mathbb{Q}}}
\global\long\def\ZZ{\mathbf{\mathbb{Z}}}
\global\long\def\bbfq{\mathbf{\mathbb{F}}_{q}}
\global\long\def\Z{\bbfq\left[t\right]}
\global\long\def\Q{\bbfq\left(t\right)}
\global\long\def\R{\bbfq\left(\left(\frac{1}{t}\right)\right)}

\newcommand{\cht}{c_{\bg}\left(\btheta\right)} 
\newcommand{\chtg}{c_{\bg}\left(\btheta,\bgamma\right)} 
\newcommand{\ch}{c_{\bg}}

\title{Solution of Cassels' Problem on a Diophantine Constant over Function Fields}	





	\author{Efrat Bank
		\thanks{Department of Mathematics, University of Michigan, Ann Arbor MI, 48109, USA, \href{mailto:ebank@umich.edu}{ebank@umich.edu}}
		\and  Erez Nesharim \thanks{School of Mathematical Sciences, Tel Aviv University, Tel Aviv 69978, Israel, \href{mailto:ereznesharim@post.tau.ac.il}{ereznesharim@post.tau.ac.il}}
		\and Steffen H\o jris Pedersen  \thanks{Department of Mathematics, Aarhus University, 8000 Aarhus C, Denmark, \href{mailto:steffenh@math.au.dk}{steffenh@math.au.dk}}
	}

\date{\today}

\maketitle
\begin{abstract}
This paper deals with an analogue of Cassels' problem on inhomogeneous Diophantine approximation in function fields. The inhomogeneous approximation constant of a Laurent series $\theta\in\R$ with respect to $\gamma\in\R$ is defined to be $c(\theta,\gamma)=\inf_{0\neq N\in\Z}|N|\cdot|\langle N\theta - \gamma \rangle|$. We show that \sloppy $\inf_{\theta\in\R} \sup_{\gamma\in \R} c(\theta,\gamma) = q^{-2}$, and prove that for every $\theta$ the set $BA_{\theta}=\left\{\gamma\in\R\sep c(\theta,\gamma)>0\right\}$ has full Hausdorff dimension. Our methods generalize easily to the case of vectors in $\R^d$.
\end{abstract}

\section{Introduction}
For a real number $\theta$, denote by $\left\langle \theta\right\rangle =\theta-\left\lfloor \theta+\frac{1}{2}\right\rfloor$
the representative in $\left[-\frac{1}{2},\frac{1}{2}\right)$ of
$\theta$ modulo the integers, and let $\left|\theta\right|$ denote
the absolute value of $\theta$. In these notation, $\left|\left\langle \theta\right\rangle \right|$ is the distance from $\theta$ to the integers.

A main topic in Diophantine approximation deals with the inhomogeneous approximations of a real number (see \cite{C57}). Given two real numbers $\theta$ and $\gamma$, define the \textit{inhomogeneous approximation constant} of $\theta$ with respect to $\gamma$ as
\begin{equation}\label{eq:casselsConstantInf}
\ctg\myeq\inf_{n\neq 0}|n|\cdot|\langle n\theta - \gamma \rangle|.
\end{equation}
Also define the set
\begin{equation}\label{eq:BAtheta}
BA_{\theta} \myeq \left\{ \gamma\in\bbr\sep c\left(\theta,\gamma\right)>0 \right\}.
\end{equation}
It was proved by \cite{BM} (cf. \cite{Kim} for a second proof):
\begin{thm}\label{thm:Kim}
For every $\theta\in\bbr\setminus\bbq$, the set $BA_{\theta}$ has zero Lebesgue measure.
\end{thm}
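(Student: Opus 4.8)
First I would reduce the statement to a purely metric one on the torus $\mathbb{T}=\bbr/\bbz$. Since $|\langle n\theta-\gamma\rangle|$ depends on $\gamma$ only modulo $1$, it suffices to work with $\gamma\in\mathbb{T}$. If $\gamma$ avoids the countable orbit $\Theta:=\bbz\theta+\bbz$, then $|n|\cdot|\langle n\theta-\gamma\rangle|>0$ for every fixed $n\neq 0$, so $\ctg>0$ is equivalent to $F(\gamma):=\liminf_{|n|\to\infty}|n|\cdot|\langle n\theta-\gamma\rangle|>0$; hence $BA_\theta$ and $\{\gamma:F(\gamma)>0\}$ differ by a subset of the null set $\Theta$, and the theorem becomes: $F=0$ Lebesgue-a.e. (A clean aside: along a near-optimal sequence one has $|\langle n\theta-\gamma\rangle|\to 0$, so replacing $\gamma$ by $\gamma\pm\theta$ only shifts the index of $n$ by one and perturbs the value by $o(1)$; thus $F$ is invariant under the irrational rotation $\gamma\mapsto\gamma+\theta$ and, by its ergodicity, $F$ equals an a.e.\ constant, so it would even suffice to produce one positive-measure set on which $F=0$ — but the argument below produces a co-null one directly.)

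The idea is to manufacture, for a.e.\ $\gamma$, a sequence of unusually good approximations to $\gamma$ from the progression $\{n\theta\}$, using the \emph{Ostrowski expansion} of $\gamma$ relative to $\theta=[a_0;a_1,a_2,\dots]$. Let $p_k/q_k$ be the convergents and $\eta_k:=q_k\theta-p_k$, so $\tfrac{1}{q_{k+1}+q_k}<|\eta_k|<\tfrac{1}{q_{k+1}}$ with $\eta_k$ alternating in sign, and recall that every $\gamma\in\mathbb{T}$ has a unique expansion $\gamma\equiv\sum_{k\ge1}b_k\eta_{k-1}\pmod 1$ with integer digits $0\le b_k\le a_k$ subject to the standard admissibility conditions (in particular $b_{k+1}=a_{k+1}$ forces $b_k=0$). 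The truncations $N_k:=\sum_{j\le k}b_jq_{j-1}$ satisfy $N_k\theta-\gamma\equiv-\sum_{j>k}b_j\eta_{j-1}\pmod 1$ and $0\le N_k\ll q_k$ with an absolute implied constant, and this gives two estimates at once: (a) if $b_{k+1}=\dots=b_{k+r}=0$, then $|\langle N_k\theta-\gamma\rangle|\le\sum_{j>k+r}a_j|\eta_{j-1}|\ll q_{k+r}^{-1}$, so $N_k|\langle N_k\theta-\gamma\rangle|\ll q_k/q_{k+r}\ll 2^{-r/2}$; and (b) for the truncation $N_{k+1}=N_k+b_{k+1}q_k$ one gets $N_{k+1}|\langle N_{k+1}\theta-\gamma\rangle|\ll \max(b_{k+1},1)\,q_k/q_{k+1}\ll \max(b_{k+1},1)/a_{k+1}$. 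Since for $\gamma\notin\Theta$ the digit string $(b_k)$ has infinitely many nonzero terms (so $N_k\to\infty$), it follows that $F(\gamma)=0$ — i.e.\ $\gamma\notin BA_\theta$ — whenever the Ostrowski digits of $\gamma$ either contain arbitrarily long runs of zeros, or satisfy $\liminf_k \max(b_{k+1},1)/a_{k+1}=0$.

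It remains to show that a.e.\ $\gamma$ is of one of these two types, and here I would split on whether $\theta$ is badly approximable. Thinking of $\gamma$ as uniform on $\mathbb{T}$ and writing $\Pr$ for Lebesgue measure, the level-$k$ cylinder $I_k(\gamma)$ (the arc on which $b_1,\dots,b_k$ are as prescribed) has length $\asymp q_k^{-1}$, and the sub-cylinder on which additionally $b_{k+1}=0$ has length $\asymp q_{k+1}^{-1}$; these elementary bounds, with absolute implied constants, give the conditional estimates $\Pr[b_{k+1}=0\mid b_1,\dots,b_k]\asymp q_k/q_{k+1}\asymp 1/a_{k+1}$ and, summing over the first $\lfloor a_{k+1}/m\rfloor$ sub-cylinders, $\Pr[b_{k+1}<a_{k+1}/m\mid b_1,\dots,b_k]\asymp 1/m$, uniformly in $k$ and in $\theta$. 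If $\sup_j a_j=:A<\infty$, then $\Pr[b_{k+1}=\dots=b_{k+r}=0\mid b_1,\dots,b_k]\ge (c/A)^r>0$, so applying L\'evy's conditional form of the Borel--Cantelli lemma along the progression $k=0,r,2r,\dots$ shows that a.e.\ $\gamma$ has runs of $r$ consecutive zero digits infinitely often, for every $r$ — hence arbitrarily long zero-runs, and $\gamma\notin BA_\theta$ by (a). If instead $\sup_j a_j=\infty$, then for each $m$ there are infinitely many $k$ with $a_{k+1}\ge m^2$, and the same conditional Borel--Cantelli argument (applied to these $k$) gives $b_{k+1}<a_{k+1}/m$ for infinitely many of them, so that $\max(b_{k+1},1)/a_{k+1}\le 1/m$ infinitely often; letting $m\to\infty$ yields $\liminf_k\max(b_{k+1},1)/a_{k+1}=0$ and $\gamma\notin BA_\theta$ by (b). In either case $F=0$ a.e., which is the theorem. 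The part I expect to be the real work is isolating the two digit mechanisms (a)--(b) cleanly from the admissibility bookkeeping, and recognizing that the badly-approximable case genuinely needs (a) rather than (b) — there is an asymmetry, since a digit $b_{k+1}$ close to $0$ produces a good approximant while one close to $a_{k+1}$ need not; the measure-theoretic endgame itself is then a routine conditional Borel--Cantelli application.
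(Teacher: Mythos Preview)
The paper does not give its own proof of this theorem: it is stated as background and attributed to \cite{BM} (with a second proof in \cite{Kim}), so there is no in-paper argument to compare against. That said, your strategy is a perfectly viable route to the result and is close in spirit to the classical treatments: reduce to showing $F(\gamma)=\liminf_{|n|\to\infty}|n|\,|\langle n\theta-\gamma\rangle|=0$ for Lebesgue-a.e.\ $\gamma$, exhibit good approximants via Ostrowski truncations, and then use a Borel--Cantelli argument on the Ostrowski digits, splitting according to whether the partial quotients of $\theta$ are bounded. The two mechanisms you isolate --- long zero-runs when $\theta$ is badly approximable, and small ratios $b_{k+1}/a_{k+1}$ along a subsequence when it is not --- are exactly the right dichotomy, and the ergodicity aside is a nice sanity check even though you do not need it.

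The one place I would ask you to be more careful is the uniform cylinder-length estimate $\Pr[b_{k+1}=0\mid b_1,\dots,b_k]\asymp 1/a_{k+1}$ ``with absolute implied constants''. Because of the admissibility rule (the constraint linking $b_{k+1}=a_{k+1}$ to $b_k$), the length of an Ostrowski $k$-cylinder genuinely depends on whether $b_k=0$ or not, and one has to check that the ratio of the $(k{+}1)$-sub-cylinder to the $k$-cylinder is bounded below uniformly in $k$ and in the prescribed digit string. This is true and standard (the cylinder $[b_1,\dots,b_k]$ has length comparable to $|\eta_{k-1}|$ up to a factor depending only on whether $b_k=0$, and the $b_{k+1}=0$ sub-cylinder has length comparable to $|\eta_k|$), but it is the step where the bookkeeping lives and where a reader will want an explicit reference or a short lemma. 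Once that lower bound is in hand, the L\'evy conditional Borel--Cantelli application goes through exactly as you describe.
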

On the other hand, the following result concerning $BA_{\theta}$ is proved in \cite{T} (see also \cite[Theorem 2.3]{BM}):
\begin{thm}\label{thm:Tseng}
For every $\theta\in\bbr$, the set $BA_{\theta}$ has Hausdorff dimension $1$.
\end{thm}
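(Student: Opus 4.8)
The plan is to produce, for every sufficiently small $c>0$, a Cantor‑type subset of $\left\{\gamma\in\bbr : c(\theta,\gamma)\ge c\right\}$ whose Hausdorff dimension tends to $1$ as $c\to 0^{+}$. Since $BA_{\theta}=\bigcup_{m\ge 1}\left\{\gamma : c(\theta,\gamma)\ge 1/m\right\}$ and Hausdorff dimension is stable under countable unions, this gives $\dim_{H}BA_{\theta}=1$ (the reverse inequality is trivial). One disposes first of the rational case: if $\theta=p/q$ in lowest terms, then $n\theta$ runs modulo $\bbz$ through the finite set $\left\{0,1/q,\dots,(q-1)/q\right\}$, so $c(\theta,\gamma)\ge\dist(\gamma,\tfrac1q\bbz)$ and $BA_{\theta}=\bbr\setminus\tfrac1q\bbz$ is co‑countable, hence of full dimension. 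So assume from now on that $\theta$ is irrational, with continued‑fraction convergents $p_{k}/q_{k}$.

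Fix $c\in(0,\tfrac12)$ and, for $n\neq 0$, set $I_{n}=\left\{\gamma : \|n\theta-\gamma\|<c/|n|\right\}$, an interval of length $2c/|n|$ about $n\theta\bmod\bbz$; then $\left\{\gamma : c(\theta,\gamma)\ge c\right\}=\bbr\setminus\bigcup_{n\ne 0}I_{n}$. Choose an integer $L\ge 2$ and build a nested sequence $\mathcal E_{0}\supseteq\mathcal E_{1}\supseteq\cdots$ of finite families of closed intervals: the members of $\mathcal E_{j}$ are intervals of the $L^{-j}$‑adic grid, and to pass from $\mathcal E_{j}$ to $\mathcal E_{j+1}$ one subdivides each $J\in\mathcal E_{j}$ into its $L$ grid subintervals of length $L^{-(j+1)}$ and retains exactly those not meeting $I_{n}$ for any $n$ in the critical range
\[
R_{j}=\Bigl\{\,n\neq 0 \;:\; L^{-(j+1)}\le \tfrac{2c}{|n|}<L^{-j}\,\Bigr\}.
\]
Each nonzero $n$ lies in exactly one $R_{j}$, and an interval surviving to stage $j(n)+1$ is disjoint from $I_{n}$ for ever after, so any point of $K_{c}:=\bigcap_{j}\bigcup\mathcal E_{j}$ avoids every $I_{n}$; hence $K_{c}\subseteq\left\{\gamma : c(\theta,\gamma)\ge c\right\}$. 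The point of this bookkeeping is that $R_{j}$ occupies a range of $|n|$ of \emph{bounded multiplicative width} $L$, while for $n\in R_{j}$ the interval $I_{n}$ has length at most $L^{-j}$, the length of a stage‑$j$ interval.

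Everything then reduces to a counting lemma, uniform in $\theta$: the number $D_{j}$ of subintervals discarded at stage $j$ satisfies $D_{j}\le\varepsilon(c,L)\cdot L\,\#\mathcal E_{j}$, where $\varepsilon(c,L)\to 0$ as $c\to 0^{+}$ for each fixed $L$. Granting this, $\#\mathcal E_{j+1}\ge L(1-\varepsilon)\#\mathcal E_{j}$, hence $\#\mathcal E_{j}\ge(L(1-\varepsilon))^{j}$, and since the retained intervals are subintervals of the $L^{-j}$‑adic grid the standard mass‑distribution estimate yields
\[
\dim_{H}K_{c}\ \ge\ \liminf_{j\to\infty}\frac{\log \#\mathcal E_{j}}{j\log L}\ \ge\ 1+\frac{\log(1-\varepsilon)}{\log L},
\]
which is $\ge 1-\delta$ as soon as $\varepsilon\le 1-L^{-\delta}$, i.e.\ as soon as $c$ is small enough; letting $\delta\to 0$ completes the argument.

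Proving the counting lemma is where the work lies, and is the step I expect to be the main obstacle. The naive estimate bounds $\#R_{j}$ via the best‑approximation property — if $R_{j}\subseteq(-q_{\ell+1},q_{\ell+1})$ then distinct $n,n'\in R_{j}$ satisfy $\|(n-n')\theta\|>\|q_{\ell}\theta\|\ge\frac1{2q_{\ell+1}}$, so each stage‑$j$ interval contains $O(q_{\ell+1}L^{-j})$ of the points $n\theta$ — but this is worthless exactly when $\theta$ has a huge partial quotient $a_{\ell+1}$, which is the Liouville‑type obstruction one must overcome. The fix is the three‑distance theorem: writing $n=m+tq_{\ell}$ with $0\le m<q_{\ell}$ and $0\le t\lesssim \#R_{j}/q_{\ell}$, one has $\|n\theta-m\theta\|=t\|q_{\ell}\theta\|$, so the points $\left\{n\theta : n\in R_{j}\right\}$ fall into at most $q_{\ell}$ tight clusters around the well‑spaced points $\left\{m\theta : 0\le m<q_{\ell}\right\}$, of total length $O(\#R_{j}\cdot\|q_{\ell}\theta\|)$; combining $\#R_{j}\asymp c\,L^{j}$, $\|q_{\ell}\theta\|<1/q_{\ell+1}$, and the good discrepancy of $\left\{m\theta\right\}_{m<q_{\ell}}$ (again three‑distance) shows that, summed over the surviving parents, the orbit points relevant to stage $j$ only damage an $O(c)$ fraction of them — which is precisely $D_{j}\le\varepsilon L\,\#\mathcal E_{j}$ with $\varepsilon=O(c)$. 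Making this count uniform over all $\theta$ is the delicate part. An alternative, and arguably cleaner, packaging of the same idea is to show that $BA_{\theta}$ is a winning set for Schmidt's $(\alpha,\beta)$‑game — the geometric sequence of scales is then supplied automatically by the players' moves — and to invoke Schmidt's theorem that winning sets have full Hausdorff dimension.
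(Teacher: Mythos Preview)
The paper does not prove Theorem~\ref{thm:Tseng}; it is quoted from the literature (Tseng~\cite{T}, see also \cite[Theorem~2.3]{BM}) as motivation for the function-field analogue. There is therefore no ``paper's own proof'' to compare against. What the paper does prove is the analogue over $\R$ (Theorem~\ref{thm:TsengAnalog1}), and its method is quite different from your sketch: it exploits the Hankel matrix $\Delta(\theta)$ and the rank structure of its finite submatrices to build an explicit Cantor set inside $BA_{\theta}$, with the indices $i_{m},j_{m}$ of Section~\ref{subsec:IndicesCons} replacing any continued-fraction or three-distance input. That linear-algebraic device is specific to the ultrametric/function-field setting and has no direct counterpart in your real-variable argument.

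As for your proposal itself: the Schmidt-game alternative you mention at the end is exactly Tseng's proof in~\cite{T}, so that route is known to work and is the cleanest. Your direct Cantor construction is in the right spirit but has a genuine soft spot. The bound you state, $D_{j}\le \varepsilon L\,\#\mathcal{E}_{j}$, is a \emph{global} count of discarded children; it does not by itself prevent some surviving parents from losing all of their children, and the inequality $\dim_{H}K_{c}\ge \liminf_{j}\frac{\log\#\mathcal{E}_{j}}{j\log L}$ does not follow from a pure cardinality count without some uniformity (that formula bounds box dimension from above, not Hausdorff dimension from below). You need either a per-parent bound---for each $J\in\mathcal{E}_{j}$, at most $\varepsilon L$ of its $L$ children are removed---or a pruning step in the style of the generalised Cantor-set lemmas (Badziahin--Velani, Kleinbock--Weiss) that converts a global removal fraction into a regular sub-Cantor set. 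Your three-distance analysis, done carefully, does give the per-parent bound: the points $\{n\theta:n\in R_{j}\}$ cluster into $\le q_{\ell}$ short arcs of total length $O(cL^{-j})$ once $q_{\ell}\le 2cL^{j+1}<q_{\ell+1}$, and each fixed parent $J$ of length $L^{-j}$ can meet $O(1)$ such clusters (because the cluster centres $\{m\theta\}_{m<q_{\ell}}$ are $1/q_{\ell}$-separated by three-distance), hence at most $O(c)L$ bad children in $J$. State and prove that local estimate explicitly; then the mass-distribution step is routine.
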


We mention that subsets of $\RR^d$ with positive Hausdorff dimension are uncountable, and that subsets with positive Lebesgue measure in $\bbr^d$ have maximal dimension, i.e., $d$ (see \cite{Falconer} for the definition of Hausdorff dimension). In view of that, Theorem~\ref{thm:Kim} states that the set $BA_{\theta}$ is small, while Theorem~\ref{thm:Tseng} states that $BA_{\theta}$ is large, and in particular, not empty. Therefore, for every $\theta$ there exists a $\gamma$ such that $\ctg>0$. This leads to the definition of the following two constants:
\begin{equation}\label{eq:casselsConstant}
\ct\myeq\sup_{\gamma}\ctg,
\end{equation}
and
\begin{equation}\label{eq:casselsConstantTrue}
c \myeq \inf_{\theta}c(\theta).
\end{equation}
Khinchine \cite{Kh} proved that $c>0$. Davenport \cite{D} was the first to give an explicit lower bound on $c$. The problem of finding the exact value of it was posed by Cassels \cite[p.86]{C57}. According to \cite{M}, the best estimate of $c$ was found in \cite{G}:
\begin{thm}\label{thm:Godwin}
\[
0.14078 < c < 0.2114\,.
\]
\end{thm}

In this work we study the analogues of these constants in the context of function fields.

\begin{remark}
Some authors consider a constant which is similar to the one defined in \eqref{eq:casselsConstantTrue}:
\begin{equation}\label{eq:casselsConstantLimInf}
\tilde{c}\myeq\inf_{\theta}\sup_{\gamma}\liminf_{n\to\infty}|n|\cdot|\langle n\theta - \gamma \rangle|.
\end{equation}
By definition we have $c\leq \tilde{c}$, and we are not aware of any result regarding equality. However, the function fields analogues of those constants coincide (cf. Theorem \ref{thm:liminf}).
\end{remark}

\subsection{Higher Dimensions}\label{subsec:higherDim}
Throughout the paper, we will denote vectors by bold symbols, and their coordinates with superscripts. Assume $d\geq1$. A \textit{weight} in $\bbr^d$ is a vector $\br\in\bbr^d$ with $r^1+\cdots+r^d=1$, $r^s \geq 0$ for any $1\leq s \leq d$. Given a weight $\br$ and $\btheta, \bgamma\in \bbr^d$, define the \textit{approximation constant} with weight $\br$ of $\btheta$ with respect to $\bgamma$ by
\[
c_{\br}(\btheta,\bgamma)\myeq \inf_{n\neq0}\max_{1\leq s\leq d}\left(\left|n\right|^{r^s}\left|\left\langle n\theta^{s}-\gamma^{s}\right\rangle \right| \right),
\]
and let
\[
BA_{\btheta}(\br) \myeq \left\{ \bgamma\in\bbr^d\sep c_{\br}\left(\btheta,\bgamma\right)>0 \right\}.
\]
As in the one dimensional case, define
\[
c_{\br}\left(\btheta\right)\myeq\sup_{\bgamma}c_{\br}(\btheta,\bgamma),
\]
and
\begin{equation}\label{eqn:casselsConstantHigherDimension}
c_{\br} \myeq \inf_{\btheta}c_{\br}\left(\btheta\right).
\end{equation}
A higher dimensional version of Theorem \ref{thm:Kim} is proved in \cite{S12} by dynamical methods:
\begin{thm}\label{thm:Shapira}
For almost every $\btheta\in\bbr^d$ (described explicitly), the set $BA_{\btheta}\left(\frac{1}{d},\ldots,\frac{1}{d}\right)$ has measure zero.
\end{thm}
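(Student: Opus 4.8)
The plan is to use the Dani correspondence between inhomogeneous approximation and the diagonal flow on the space of grids (affine unimodular lattices), and then to exploit the fact that the grids attached to pairs $(\btheta,\bgamma)$ form precisely an expanding horospherical orbit, so that the classical equidistribution of expanding horospheres does the work. Set $\br=(1/d,\dots,1/d)$ and $g_{t}=\operatorname{diag}(e^{-t},e^{t/d},\dots,e^{t/d})\in\mathrm{SL}_{d+1}(\bbr)$. Let $u_{\btheta}\in\mathrm{SL}_{d+1}(\bbr)$ be the unipotent matrix with first column $(1,\theta^{1},\dots,\theta^{d})^{\mathrm T}$ and the identity off the first column, and let $v_{\bgamma}=(0,-\gamma^{1},\dots,-\gamma^{d})^{\mathrm T}\in\bbr^{d+1}$. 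Work in the space of grids $Y=(\mathrm{SL}_{d+1}(\bbr)\ltimes\bbr^{d+1})/(\mathrm{SL}_{d+1}(\bbz)\ltimes\bbz^{d+1})$, a torus bundle over $X=\mathrm{SL}_{d+1}(\bbr)/\mathrm{SL}_{d+1}(\bbz)$, and attach to $(\btheta,\bgamma)$ the grid $Y_{\btheta,\bgamma}=u_{\btheta}\bbz^{d+1}+v_{\bgamma}$, whose points are $(n,\,n\btheta-\bm-\bgamma)$ with $(n,\bm)\in\bbz\times\bbz^{d}$. Let $\Delta(\Lambda+v)=\min_{w\in\Lambda+v}\|w\|_{\infty}$ denote the distance from the origin to a grid; this is continuous on $Y$, and $\{\Delta<\varepsilon\}$ is a nonempty open set for each $\varepsilon>0$. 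Optimising, for each fixed $n\neq0$, the balance between the coordinate $e^{-t}n$ and the coordinates $e^{t/d}(n\theta^{s}-m^{s}-\gamma^{s})$ of $g_{t}(n,\,n\btheta-\bm-\bgamma)$ (the minimising $t$ turns out to be nonnegative) one obtains $\inf_{t\geq0}\Delta(g_{t}Y_{\btheta,\bgamma})\asymp\min\bigl(c_{\br}(\btheta,\bgamma)^{d/(d+1)},\,\max_{s}|\langle\gamma^{s}\rangle|\bigr)$, the second term being the contribution of the points with $n=0$. Hence, for $\bgamma\notin\bbz^{d}$, one has $\bgamma\in BA_{\btheta}(\br)$ if and only if the forward orbit $\{g_{t}Y_{\btheta,\bgamma}:t\geq0\}$ avoids $\{\Delta<\varepsilon\}$ for some $\varepsilon>0$; in particular, if that orbit is dense in $Y$ then $\bgamma\notin BA_{\btheta}(\br)$.

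The second step is the observation that $\{Y_{\btheta,\bgamma}:\btheta,\bgamma\in\bbr^{d}\}$ is the orbit of the base point of $Y$ under the expanding horospherical subgroup $U^{+}=\{(u_{\btheta},v_{\bgamma})\}$ of $g_{t}$: the lower-triangular directions $u_{\btheta}$ and the translations by $v_{\bgamma}$ are exactly the directions expanded by $\operatorname{Ad}(g_{t})$ on $\mathfrak{sl}_{d+1}\oplus\bbr^{d+1}$, and $u_{\btheta}$ fixes $v_{\bgamma}$, so $U^{+}$ is abelian and the element $(u_{\btheta},v_{\bgamma})$ sends the base coset to $Y_{\btheta,\bgamma}$. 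Since $U^{+}$, the opposite horosphere $U^{-}$, and the centralizer of $g_{t}$ together generate $\mathrm{SL}_{d+1}(\bbr)\ltimes\bbr^{d+1}$, the flow $g_{t}$ is mixing on $(Y,\mathrm{Haar})$: in the unitary representation on $L^{2}(Y)$ the fibrewise Fourier modes indexed by nonzero $k$ have unbounded $g_{t}$-orbits, hence carry no invariant vectors and have decaying matrix coefficients, while the zero mode is $L^{2}(X)$, on which $g_{t}$ is mixing by Howe--Moore. Mixing of $g_{t}$ forces the expanding horospheres to equidistribute: $(g_{t})_{*}$ of normalised Lebesgue measure on any bounded box $B\subset U^{+}$ converges weak-$*$ to $\mathrm{Haar}_{Y}$ as $t\to\infty$. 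A Lebesgue-density/differentiation argument along the horosphere — in effective form, using a rate of mixing, if a soft version does not suffice — then upgrades this to: for Lebesgue-almost every $(\btheta,\bgamma)\in\bbr^{2d}$, the forward orbit $\{g_{t}Y_{\btheta,\bgamma}:t\geq0\}$ equidistributes in $Y$, and in particular is dense.

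Finally, applying Fubini to the null set of exceptional $(\btheta,\bgamma)$ from the second step produces an explicit full-measure set $\Theta\subseteq\bbr^{d}$ such that for every $\btheta\in\Theta$ and Lebesgue-almost every $\bgamma$ the orbit $\{g_{t}Y_{\btheta,\bgamma}:t\geq0\}$ is dense in $Y$; by the first step, $\bgamma\notin BA_{\btheta}(\br)$ for all such $\bgamma$ (discarding also the null set $\bgamma\in\bbz^{d}$), so $BA_{\btheta}(\br)$ is Lebesgue-null for every $\btheta\in\Theta$, which is the assertion, and $\Theta$ is the ``explicitly described'' set in the statement. I expect the main obstacle to be the upgrade in the second step, from weak-$*$ convergence of the horosphere-averaged measures to equidistribution of almost every individual trajectory: a soft limit of measures says nothing about single orbits, so one must bring in either a quantitative (effective mixing) input together with a Borel--Cantelli argument along $t\in\bbn$, or a wavefront/density argument exploiting the uniform expansion of $U^{+}$, and it is exactly this step that determines the description of $\Theta$. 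A secondary subtlety worth flagging is that the relevant cusp region $\{\Delta<\varepsilon\}$ in the first step is \emph{not} the complement of a compact subset of $Y$ — a grid can be uniformly far from the origin while its underlying lattice degenerates in $X$ — so it is the geometry of grids, not Mahler compactness on $X$, that is in play.
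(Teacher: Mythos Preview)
The paper does not prove this theorem. Theorem~\ref{thm:Shapira} is stated purely as background and attributed to Shapira~\cite{S12}, with the sole remark that it is ``proved in \cite{S12} by dynamical methods''; the paper's own contributions are the function-field results (Theorems~\ref{thm:TsengAnalog} and~\ref{thm:CasAnalog}), and the authors explicitly say that the function-field analogue of Theorem~\ref{thm:Shapira} ``will not be pursued in this note''. There is therefore no proof in the paper to compare your proposal against.

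That said, your outline is in the spirit of the dynamical approach of~\cite{S12}: the Dani-type correspondence on the space of grids $Y$, the identification of the $(\btheta,\bgamma)$-family with an expanding horospherical leaf for the diagonal flow, and the appeal to mixing and horosphere equidistribution are all the right ingredients. You have also correctly isolated the genuine obstacle, namely the passage from equidistribution of horosphere \emph{averages} to almost-sure equidistribution of \emph{individual} orbits along the single leaf through the base point; Birkhoff ergodicity of $g_t$ on $(Y,\mathrm{Haar})$ gives generic points only Haar-a.e., and the leaf is Haar-null, so an extra argument is required. One further gap worth naming: your final Fubini step yields only an \emph{abstractly} full-measure set $\Theta\subset\bbr^{d}$, whereas the statement promises an explicit description. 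In~\cite{S12} this is addressed by first imposing a dynamical condition on $\btheta$ alone---formulated in terms of the orbit of the lattice $u_{\btheta}\bbz^{d+1}$ in the space of lattices $X$---and then arguing fibrewise over the torus of $\bgamma$'s sitting above that lattice; this simultaneously handles the a.e.\ upgrade and furnishes the explicit $\Theta$.
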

The higher dimensional version of Theorem \ref{thm:Tseng} appeared only recently in \cite{PM}, extending a result proved independently by \cite{BHKV} and \cite{ET} about the weight $\br=\left(\frac{1}{d},\ldots,\frac{1}{d}\right)$:
\begin{thm}\label{thm:PM}
For every weight $\br$ and $\btheta\in\bbr^d$, the set $BA_{\btheta}\left(\br\right)$ has dimension $d$.
\end{thm}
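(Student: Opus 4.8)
The plan is as follows. The bound $\dim_H BA_{\btheta}(\br)\le d$ is automatic, so only the reverse inequality needs proof, and it suffices to exhibit, for a single small $\eps=\eps(\btheta,\br)>0$, a subset of the level set $L_\eps=\{\bgamma\sep c_{\br}(\btheta,\bgamma)\ge\eps\}$ of Hausdorff dimension arbitrarily close to $d$. I would produce it by a nested Cantor-type construction together with the mass distribution principle (\cite{Falconer}); the equivalent and standard alternative, which is what \cite{BHKV,ET,PM} in fact carry out, is to prove that $L_\eps$ is \emph{winning} for a variant of Schmidt's game played with boxes matched to the quasinorm $\|\bx\|_{\br}=\max_s|x^s|^{1/r^s}$, winning sets having full Hausdorff dimension. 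I describe the Cantor version; the game is parallel, Bob's moves being the surviving boxes and Alice's strategy being ``dodge the active resonances''.

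The building blocks are boxes whose $s$-th side is a power $\rho^{r^s}$ of a common ``scale'' $\rho$, so that they are shaped like the resonance neighbourhoods
\[
\Delta(n)=\bigl\{\bgamma\sep|\langle n\theta^s-\gamma^s\rangle|<\eps|n|^{-r^s}\text{ for all }s\bigr\}\qquad(n\neq0),
\]
which are boxes of $s$-side $2\eps|n|^{-r^s}$ about the resonance point $n\btheta\bmod\bbz^d$. Starting from one box, one passes from a box of scale $\rho$ to its $\asymp R$ sub-boxes of scale $\rho/R$ (subdividing into $R^{r^s}$ parts in direction $s$) and deletes those children that meet some $\Delta(n)$ with $|n|$ in the range making $\Delta(n)$ roughly child-sized; the limit set $\calc$ then lies in $L_\eps$ once the initial box is chosen to avoid the finitely many $\Delta(n)$ treated at no level. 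A mass-distribution argument — in which the $r_{\min}=\min_s r^s$ factors coming from the eccentric shape of the boxes cancel against the number of such boxes a Euclidean ball of comparable diameter can meet — shows $\dim_H\calc\to d$ as $R\to\infty$, as long as each box keeps at least a fixed positive fraction of its $\asymp R$ children. Since one child-sized $\Delta(n)$ meets $\asymp(2\eps)^dR$ of those children, everything reduces to the counting statement: \emph{inside any box of the construction, the currently treated $\Delta(n)$'s are contained in $O_{\br}(1)$ boxes of the size $\Delta(n)$ then has} — which leaves $\ge R\bigl(1-O_{\br}(\eps^d)\bigr)$ children, a fixed fraction once $\eps$ is fixed small.

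The counting statement is the crux, and this is where the structure of $\btheta$ enters, through the quantitative theory of its simultaneous best approximations in the $\|\cdot\|_{\br}$-quasinorm — a higher-dimensional, weighted form of continued fractions, with denominators $q_0<q_1<\cdots$ and errors $\psi_t=\|q_t\btheta\|_{\br}\downarrow0$. The underlying dichotomy is already the mechanism of the one-dimensional theorem (\cite{T}): the integers $q$ realising a small $\|q\btheta\|_{\br}$ are essentially the multiples of a single $q_t$, so for $|n|\lesssim\eps q_{t+1}$ the resonances $\{n\btheta\bmod\bbz^d\}$ are clustered so tightly that \emph{all} of them, over that whole range, lie in boxes of one fixed scale and can be deleted in a single batch of $O_{\br}(1)$ removals, whereas for $|n|\gtrsim\eps q_{t+1}$ one has descended below the minimal resonance separation at the relevant scale, so each box meets only $O(1)$ of the resonances. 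Consequently the construction's levels must be \emph{re-indexed around the pairs $(q_t,\psi_t)$} rather than run through uniform powers of $R$: between consecutive best approximations there are a controlled number of ``quiet'' levels, punctuated by ``special'' levels at scale $\asymp\psi_t$ at which a large batch of resonances — those clustering near the $q_t$-th convergent — is handled at once, these clusters being, at that scale, spread out uniformly enough that every box meets only $O_{\br}(1)$ of them.

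The main obstacle, as I see it, is precisely this book-keeping: setting up the re-indexing, verifying that every $n$ is treated at exactly one level, and checking the counting statement uniformly — including through the transitional scales near each $\psi_t$ and for a $\btheta$ that may be arbitrarily well approximable. For the isotropic weight $\br=(1/d,\dots,1/d)$ the boxes are genuine round balls, and the program is carried out in \cite{BHKV} and \cite{ET}; the contribution of \cite{PM} is to push it through — together with the weighted best-approximation input it needs — for an arbitrary weight $\br$, where every ball in sight is one of the eccentric $\|\cdot\|_{\br}$-boxes.
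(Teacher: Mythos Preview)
Your sketch is a faithful outline of the approach in the cited references \cite{BHKV,ET,PM}, but note that the paper itself does \emph{not} prove Theorem~\ref{thm:PM}: it is quoted in the introduction as a known result over $\bbr$, attributed to \cite{PM} (extending \cite{BHKV,ET}), and serves only as motivation for the paper's own function-field analogue, Theorem~\ref{thm:TsengAnalog}. So there is no ``paper's own proof'' of this statement to compare against.

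That said, it is worth contrasting your plan with the method the paper does develop for the analogue over $\R$. Your approach is dynamical/geometric: eccentric boxes matched to the weight, resonance neighbourhoods $\Delta(n)$, a best-approximation sequence $(q_t,\psi_t)$ controlling the clustering of resonances, and a re-indexed Cantor scheme (or equivalently a Schmidt-game strategy) whose survival fraction is bounded via the counting statement you isolate. The paper's route to Theorem~\ref{thm:TsengAnalog} is instead purely linear-algebraic, in the spirit of Davenport--Lewis \cite{DL}: the condition $|N|^{g^s(\deg N)/\deg N}|\langle N\theta^s-\gamma^s\rangle|<1$ is rewritten as a linear system $\Delta[h+1+\ell,h+1]\,\bn=\pi_{\bg(h+1+\ell)}(\bgamma)$, and the whole argument reduces to tracking the ranks of the nested Hankel-type blocks $\Delta[i,j]$ via the index sequences $(i_m),(j_m)$ of Section~\ref{subsec:IndicesCons}. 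The Cantor construction there (Proposition~\ref{prop:mainProp2}) removes, at each stage, a single hyperplane $\{(\bb_m)^t v=0\}$ rather than a family of resonance boxes, and the dimension bound comes from Theorem~\ref{thm:CantorDim} with no appeal to best approximations or to the ultrametric geometry of the boxes. Your approach buys robustness --- it works over $\bbr$ where no such clean matrix reformulation is available --- at the cost of the delicate book-keeping you flag; the paper's approach exploits the non-archimedean setting to replace all of that by elementary rank computations, and as a bonus yields the exact value $c_{\bg}=q^{-2}$ (Theorem~\ref{thm:CasAnalog}), something the Schmidt-game method does not naturally give.
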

As for \eqref{eqn:casselsConstantHigherDimension}; Cassels \cite[Theorem $X$]{C57} showed that $c_{\left(\frac{1}{d},\ldots,\frac{1}{d}\right)} > 0$, and an explicit lower bound was established in \cite{BL}:
\begin{thm}\label{thm:BL}
For every $d\geq1$
\[
c_{\left(\frac{1}{d},\ldots,\frac{1}{d}\right)}\geq\frac{1}{72d^28^\frac{1}{d}}.
\]
\end{thm}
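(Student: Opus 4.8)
The plan is to prove the inequality constructively: for an arbitrary $\btheta\in\bbr^d$ I will produce one $\bgamma\in\bbr^d$ with $c_{\left(\frac1d,\dots,\frac1d\right)}(\btheta,\bgamma)\geq\kappa_d$, where $\kappa_d=\frac1{72d^28^{1/d}}$, which yields the bound on $c_{\left(\frac1d,\dots,\frac1d\right)}=\inf_\btheta\sup_\bgamma c_{\left(\frac1d,\dots,\frac1d\right)}(\btheta,\bgamma)$ at once. After the standard reduction to the case where $1,\theta^1,\dots,\theta^d$ are linearly independent over $\bbq$ (if not, some coordinates obey rational relations and the estimate is easier, effectively living in lower dimension), the sequence of best simultaneous approximations of $\btheta$ in the sup norm is infinite; write $1=q_0<q_1<q_2<\cdots$ for its denominators, $\bp_k\in\bbz^d$ for the numerator vectors, and $\psi_k=|q_k\btheta-\bp_k|_\infty$, a strictly decreasing sequence. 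I shall use only the two classical facts $\psi_k<q_{k+1}^{-1/d}$ (Dirichlet) and $\|n\btheta\|_\infty\geq\psi_k$ whenever $0<|n|<q_{k+1}$ (the best-approximation property).

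\emph{Step 1: a geometric ladder of scales.} First I extract a subsequence $1=Q_0<Q_1<Q_2<\cdots$ of best approximations with $Q_{i+1}/Q_i\geq\Lambda$ for a ratio $\Lambda$ comparable to $2^d$. That this is possible rests on a geometry-of-numbers input: $d+1$ suitably chosen consecutive best-approximation vectors $(q_j,q_j\btheta-\bp_j)\in\bbr^{d+1}$ span $\bbr^{d+1}$, so the integer determinant they form is nonzero, hence of modulus $\geq 1$, and expanding it with the size bounds on its entries bounds $q_{k+d}\psi_k^{\,d}$ from below. Because consecutive $q_k$ can have ratio as small as $1+O(1/d)$, one may have to skip $O(d^2)$ best approximations in order to gain the factor $\Lambda\sim2^d$ per rung, and this interplay between denominator growth and dimension is exactly what puts the $d^2$ into the final constant.

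\emph{Step 2: adaptive construction of $\bgamma$.} I then build $\bgamma$ as the limit of a sequence $\bgamma_0,\bgamma_1,\dots$, in which $\bgamma_{i+1}$ is a perturbation of $\bgamma_i$ of size at most a fixed multiple of $\kappa_d Q_i^{-1/d}$, maintaining at stage $i$ that $\|n\btheta-\bgamma_i\|_\infty\geq2\kappa_d Q_i^{-1/d}$ for all $0<|n|\leq Q_{i+1}$. Existence of such a $\bgamma_i$ inside the ball left free at stage $i-1$ is a volume count: at most $2Q_{i+1}$ residues $n\btheta\bmod\bbz^d$ must be avoided by sup-balls of radius $2\kappa_d Q_i^{-1/d}$, whose total volume is at most $2Q_{i+1}(4\kappa_d Q_i^{-1/d})^d=2(Q_{i+1}/Q_i)(4\kappa_d)^d$; once the ratio $Q_{i+1}/Q_i$ is controlled (the crux, discussed below) this is less than $\tfrac12$. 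One checks that the tail $\sum_{j>i}(\text{perturbation}_j)$ stays below $\kappa_d Q_i^{-1/d}$ — it converges because $Q_{i+1}/Q_i\geq\Lambda>1$, and optimizing here produces the constants $72$ and $8^{1/d}$ — so in the limit $\|n\btheta-\bgamma\|_\infty\geq\kappa_d Q_i^{-1/d}$ for $|n|\leq Q_{i+1}$; hence for $Q_i\leq|n|<Q_{i+1}$ one gets $|n|^{1/d}\|n\btheta-\bgamma\|_\infty\geq Q_i^{1/d}\kappa_d Q_i^{-1/d}=\kappa_d$. The finitely many small $n$ not covered by any rung, and all $n$ with $\|n\btheta\|_\infty$ very small, are absorbed by keeping $\bgamma$ at distance $\geq\kappa_d$ from $\bbz^d$ throughout.

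\emph{The main obstacle.} The genuinely delicate point is that Step 2, carried out crudely, cannot avoid the residues $n\btheta\bmod\bbz^d$ for $|n|\leq Q_{i+1}$ rung by rung: for $n$ in the lower part of a long rung the required radius $\kappa_d|n|^{-1/d}$ is no longer small compared with $\|n\btheta\|_\infty$, so the bad balls of different rungs interfere and the naive volume bound above breaks. Handling this forces one to replace the crude count by the fine structure of $\{n\btheta\bmod\bbz^d\}$ — a $d$-dimensional three-distance / Ostrowski analysis, in which $\bgamma$ is pinned down digit by digit in its Ostrowski expansion relative to $\btheta$ so that it stays centrally placed at all scales simultaneously. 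Pushing that analysis through with fully explicit constants, losing only the factor $d^2$, is the heart of the argument; the exact value $72d^28^{1/d}$ is merely the output of optimizing those constants and carries no deeper significance.
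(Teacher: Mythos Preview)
The paper does not prove this theorem. Theorem~\ref{thm:BL} appears in the introductory survey (Section~\ref{subsec:higherDim}) as a known result of Bugeaud and Laurent, cited as \cite{BL}; it is quoted to motivate the function-field questions that the paper actually addresses, and no proof is given or claimed. So there is no ``paper's own proof'' against which to compare your attempt.

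Regarding the proposal itself: what you have written is an outline rather than a proof, and you say so. Steps~1 and~2 describe a plausible architecture (extract a geometrically growing ladder of best-approximation denominators, then build $\bgamma$ scale by scale by a volume argument), but your final paragraph explicitly concedes that the volume count in Step~2 does not work as stated, because the excluded balls at different rungs interfere. You then assert that the remedy is a $d$-dimensional Ostrowski/three-distance analysis and that ``pushing that analysis through \dots\ is the heart of the argument'' --- and stop. That is the gap: the decisive step is named but not carried out. In particular, nothing in the sketch controls the upper ratio $Q_{i+1}/Q_i$, which is what the crude volume bound $2(Q_{i+1}/Q_i)(4\kappa_d)^d$ needs; best-approximation denominators can have arbitrarily large consecutive ratios (Liouville-type $\btheta$), so the count as written can exceed $1$ by any amount. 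Without the promised finer analysis the argument does not close, and the specific constant $72d^28^{1/d}$ is not derived anywhere in the text.
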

We know of no results regarding $c_{\br}$ for a general weight $\mathbf{r}$.
\subsection{The Function Fields Analogue of Diophantine Approximation}\label{subsec:functionFieldAnalog}
The function fields analogue of Diophantine approximation has been studied since the work of Artin \cite{Artin}. It is sometimes referred to as Diophantine approximation in positive characteristic. Every statement in Diophantine approximation has an analogous statement in this context. Let us introduce the dictionary which is used for translating statements (and sometimes, their proofs) from one context to the other. Let $q$ be a prime power, and let $\bbf_q$ be the field with $q$ elements. Define an absolute value on $\Z$ by $|N|\myeq q^{\deg N}$ for $0\neq N\in\Z$, and $|0|=0$. Extend this definition to the fraction field, the field of rational functions $\Q$, by $\left\lvert\frac{M}{N}\right\rvert\myeq q^{\deg M-\deg N}$ where $M, N\in\Z$, $N\neq0$. The field $\R$ of formal Laurent series in $t$ with finite number of non zero coefficients of positive powers of $t$, is the completion of $\Q$ with respect to this absolute value. Extending the absolute value continuously to $\R$, gives that the absolute value of a non zero $\theta \in \R$, written as
\[
\theta=\sum_{i=-\deg\theta}^{\infty}\theta_{i}t^{-i},
\]
where $\deg\theta\myeq\max\left\{ -i\sep\theta_{i}\neq0\right\}$, is
\[
\left|\theta\right|=q^{\deg\theta}.
\]

The set
\[
I\myeq\left\{\theta \in \R \sep |\theta| < 1 \right\}.
\]
is a natural set of representatives for elements in $\R$ up to the equivalence relation of having a difference which is a polynomial. We denote $\left\langle \theta\right\rangle =\sum_{i=1}^{\infty}\theta_{i}t^{-i}$ and consider it to be the representative of $\theta$ in $I$. We call $\left\langle \theta\right\rangle$ and $\theta - \left\langle \theta\right\rangle$ the \textit{fractional part} and the \textit{polynomial part} of $\theta$, respectively. These definitions give the dictionary:
\[
\begin{array}{lll}
\bbf_q[t]   & \leftrightsquigarrow & \ZZ\\
\bbf_q(t)   & \leftrightsquigarrow & \QQ\\
\R          & \leftrightsquigarrow & \RR\\
|\theta|=q^{\deg\theta}                             & \leftrightsquigarrow & |\theta|\\
|\langle\theta\rangle|=\dist \left(\theta,\Z\right) & \leftrightsquigarrow & |\langle\theta\rangle|=\dist \left(\theta,\bbz\right).\\
\end{array}
\]

\subsection{Previous Works in Inhomogeneous Approximation in Function Fields}\label{subsec:inhomogeneousFunctionField}
The analogue of inhomogeneous approximation in function fields was studied in \cite{Mahler}. Recently, this subject has regained interest, parallel to a significant progress in the real case \cite{K11,KN,CF,FK}. Let us use the dictionary described above in order to define the function fields analogues of \eqref{eq:casselsConstantInf}, \eqref{eq:BAtheta}, \eqref{eq:casselsConstant} and \eqref{eq:casselsConstantTrue}. For  $\theta,\;\gamma \in\R$, denote
\begin{equation}\label{eq:FFcasselsConstantInf}
\ctg\myeq\inf_{0\neq N}|N|\cdot|\langle N\theta - \gamma \rangle|
\end{equation}
where $N$ varies over the non zero polynomials in $\Z$,
\begin{equation}\label{eq:FFBAbtheta}
BA_{\theta} \myeq \left\{ \gamma\in\R\sep\ctg>0\right\},
\end{equation}
\begin{equation}\label{eq:FFcasselsConstant}
\ct\myeq\sup_{\gamma}\inf_{0\neq N}|N|\cdot|\langle N\theta - \gamma \rangle|,
\end{equation}
and
\begin{equation}\label{eq:FFcasselsConstantTrue}
c\myeq\inf_{\theta}c(\theta).
\end{equation}
An analogue of Theorem \ref{thm:Kim} was proved in \cite{KN}:
\begin{thm}\label{thm:KimNakada}
For every $\theta\in\R\setminus\Q$, the set $BA_{\theta}$ has zero measure.
\end{thm}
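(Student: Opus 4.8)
My plan is to prove the stronger statement that $\ctg=0$ for almost every $\gamma$, so that $BA_{\theta}\subseteq\{\gamma:\ctg\neq0\}$ is null. Since $\ctg$ is unchanged when $\theta$ or $\gamma$ is shifted by a polynomial, and $BA_{\theta}$ is thus invariant under $\Z$, I would first reduce to $\theta\in I\setminus\Q$ with $\gamma$ ranging over $I$ equipped with its normalized Haar measure $\mu$. I would then bring in the continued fraction expansion $\theta=[0;a_{1},a_{2},\dots]$ over $\Z$, with convergents $p_{k}/q_{k}$, and set $d_{k}=\deg q_{k}=\sum_{i\le k}\deg a_{i}$ and $\eta_{k}=q_{k}\theta-p_{k}$, so that $|\eta_{k}|=q^{-d_{k+1}}$. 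The key structural input is the function-field Ostrowski expansion: every $\gamma\in I$ has a unique representation $\gamma=\sum_{k\ge1}b_{k}\eta_{k-1}$ with $b_{k}\in\Z$ and $\deg b_{k}<\deg a_{k}$. In positive characteristic there is no carrying condition on the digits --- this is where the problem is genuinely cleaner than over $\bbr$ --- and counting the balls $\{\gamma\in I:b_{1},\dots,b_{K}\text{ prescribed}\}$ shows that $\gamma\mapsto(b_{k})_{k}$ transports $\mu$ to the product of the uniform measures on the digit sets; in particular the digits are mutually independent.

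The heart of the argument I have in mind is a concrete family of inhomogeneous approximants built from the Ostrowski digits. Given $\gamma$ with digits $(b_{k})$ and an index $m$ for which $b_{1},\dots,b_{m}$ are not all zero (which holds for all large $m$ whenever $\gamma$ has infinitely many nonzero digits, a full-measure condition), put $N=\sum_{j=0}^{m-1}b_{j+1}q_{j}$; then $N\neq0$ and $\deg N\le d_{m}-1$. Using $q_{j}\theta\equiv\eta_{j}\pmod{\Z}$ one finds $\langle N\theta-\gamma\rangle=-\sum_{j\ge m}b_{j+1}\eta_{j}$, whose absolute value, by the ultrametric inequality, equals $|b_{m'+1}\eta_{m'}|$ with $m'=\min\{j\ge m:b_{j+1}\neq0\}$. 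Consequently
\[
|N|\cdot\bigl|\langle N\theta-\gamma\rangle\bigr|\ \le\ q^{\,d_{m}-1}\cdot q^{\,\deg b_{m'+1}-d_{m'+1}}\ =\ q^{\,-1-T_{m}},\qquad T_{m}:=\sum_{j=m+1}^{m'}\deg a_{j}+\bigl(\deg a_{m'+1}-\deg b_{m'+1}\bigr),
\]
since $d_{m'+1}-d_{m}=\sum_{j=m+1}^{m'+1}\deg a_{j}$ and $\deg b_{m'+1}\le\deg a_{m'+1}-1$. Hence $\ctg\le q^{-1-\limsup_{m}T_{m}}$, and the whole theorem reduces to proving $\limsup_{m}T_{m}=\infty$ for $\mu$-a.e.\ $\gamma$.

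This last step splits into two regimes, and I expect it to be the real obstacle because each regime needs a different application of the divergence Borel--Cantelli lemma. If $(\deg a_{k})_{k}$ is unbounded, then for each $L\in\bbn$ there are infinitely many $m$ with $\deg a_{m+1}\ge L$; for every such $m$ the event $\{\deg b_{m+1}\le\deg a_{m+1}-L\}$ has probability $q^{1-L}$ and forces $T_{m}\ge L$, and as these events involve pairwise distinct digits they are independent, so Borel--Cantelli gives $T_{m}\ge L$ infinitely often, almost surely. If instead $\deg a_{k}\le D$ for all $k$, this one-digit trick is useless (the event above is empty for $L>D$), and one must instead produce arbitrarily long runs of vanishing digits: splitting $\bbn$ into consecutive length-$L$ blocks, the event that all digits of a given block vanish is independent of the others and has probability $\prod_{j\in\text{block}}q^{-\deg a_{j}}\ge q^{-LD}>0$, so infinitely many blocks vanish almost surely, and at the start $m$ of an all-zero run of length $L$ one has $m'-m\ge L$ and hence $T_{m}\ge L$. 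In either regime, intersecting over $L\in\bbn$ yields $\limsup_{m}T_{m}=\infty$ almost surely, so $\ctg=0$ almost surely, and the theorem follows. The point I would stress is that the one-digit estimate alone only keeps $\ctg$ bounded away from $0$ (by $q^{-2}$) when $\theta$ has bounded partial quotients; forcing $\ctg$ down to $0$ in that case genuinely requires exploiting long strings of zero Ostrowski digits, and with them the independence of the digits, rather than a single digit at a time.
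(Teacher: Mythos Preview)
The paper does not give its own proof of this statement; Theorem~\ref{thm:KimNakada} is quoted from \cite{KN} as background. So there is no in-paper argument to compare against, and your proposal must be assessed on its own.

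Your argument is correct. Two points deserve an explicit line in the write-up. First, the existence and uniqueness of the function-field Ostrowski expansion: the map $(b_1,\ldots,b_K)\mapsto\sum_{k\le K}b_k\eta_{k-1}=\langle N\theta\rangle$, with $N=\sum_{k\le K}b_kq_{k-1}$, is a bijection from $\prod_{k\le K}\{b:\deg b<\deg a_k\}$ onto $\{\langle N\theta\rangle:\deg N<d_K\}$; by the best-approximation property of convergents these $q^{d_K}$ points are pairwise at distance $\ge|\eta_{K-1}|=q^{-d_K}$, so the open balls $B(\langle N\theta\rangle,q^{-d_K})$ are disjoint and, by measure count, partition $I$. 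This single observation simultaneously gives the expansion, its uniqueness, the absence of a carrying condition, and the product law for the digits. Second, the exceptional $\gamma$ with only finitely many nonzero Ostrowski digits are precisely the points $\langle N\theta\rangle$ for $N\in\Z$; this is a countable set, and for $N\ne0$ one has $c(\theta,\gamma)=0$ trivially, while $\gamma=0$ is a single null point. With these made explicit, your computation $|N|\cdot|\langle N\theta-\gamma\rangle|\le q^{-1-T_m}$ and the two-regime Borel--Cantelli argument (single-digit events when $\deg a_k$ is unbounded, long zero-runs when it is bounded) go through exactly as you describe and yield $c(\theta,\gamma)=0$ for $\mu$-almost every $\gamma$.
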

The measure mentioned here is the natural measure on $\R$, which we will recall in Section~\ref{sec:MeasureAndDim}.
Before we formulate higher dimensional analogues, let us introduce a more general notion of weight which is more natural to this context.
A \textit{generalized weight} is a function
$\bg=\left(g^1,\ldots,g^d\right):\bbn\to\bbn^d$, such that for every $1\leq s\leq d$ the function $g^s:\bbn\to\bbn$ is non decreasing, and
\[
\sum_{s=1}^{d}g^s\left(h\right) = h,
\]
for every $h\in\bbn$. Define the higher dimensional versions of \eqref{eq:FFcasselsConstantInf}, \eqref{eq:FFBAbtheta}, \eqref{eq:FFcasselsConstant}, and \eqref{eq:FFcasselsConstantTrue}: For any $\btheta,\;\bgamma \in\R^d$, let
\[
\chtg\myeq\inf_{0\neq N}\max_{1\leq s\leq d}|N|^{\frac{{g^s\left(\deg N\right)}}{\deg N}}\cdot|\langle N\theta^s - \gamma^s \rangle|,
\]
where, by convention, $\frac{g^s\left(0\right)}{0}=1$,
\[
BA_{\btheta}\left(\bg\right) \myeq \left\{ \bgamma\in\R^d\sep\chtg>0\right\},
\]
\[
\cht\myeq\sup_{\bgamma}\chtg ,
\]
and
\[
\ch\myeq\inf_{\btheta}\cht.
\]
While the approach of \cite{S12} is likely to give a proof for the function fields analogue of Theorem~\ref{thm:Shapira}, this line will not be pursued in this note. The reader is referred to \cite{Gho,HP} to learn more about the dynamical approach towards Diophantine approximation in function fields.
\begin{remark}\label{rem:genWeight}
Any weight $\br$ in the sense of Section \ref{subsec:higherDim}, induces a generalized weight $\bg_\br$, by letting $\bg_{\br}(0)\myeq(0,\ldots,0)$ and
$\bg_{\br}(h+1)\myeq\bg_{\br}(h)+\be_s,$ where $1\leq s\leq d$ is any index satisfying
\begin{equation}\label{eq:inducedWeight}
r^s\cdot\left(h+1\right) - g_{\br}^s(h) = \max_{1\leq t\leq d} r^t\cdot\left(h+1\right) - g_{\br}^t(h)
\end{equation}
Note that for every $h\geq0$, we have
\begin{equation}\label{eq:l1norm}
\sum_{1\leq s\leq d} r^s h = \sum_{1\leq s\leq d} g_{\br}^s(h) = h.
\end{equation}
Therefore, there exists $1\leq s\leq d$ such that
\begin{equation}\label{eq:lowerBound}
r^s\cdot\left(h+1\right) - g_{\br}^s(h)\geq \frac{1}{d}.
\end{equation}
By induction on $h$, using \eqref{eq:inducedWeight} and \eqref{eq:lowerBound}, we conclude that
\[
    r^s h - g_{\br}^s(h)\geq - \left(1-\frac{1}{d}\right),
\]
for every $h\geq 0$ and $1\leq s\leq d$. On the other hand, by \eqref{eq:l1norm} and \eqref{eq:lowerBound} we get
\[
	r^s h - g_{\br}^s(h) = - \left(\sum_{t\neq s} r^t h - g_\br^t(h) \right) \leq (d-1)\left(1-\dfrac{1}{d}\right),
\]
The upshot is that for any $\btheta,\bgamma\in\R^d$, the approximation constant $c_{\bg_{\br}}\left(\btheta,\bgamma\right)$ differs from
\[
c_{\br}\left(\btheta,\bgamma\right)\myeq\inf_{0\neq N}\max_{1\leq s\leq d}|N|^{r^s}\cdot|\langle N\theta^s - \gamma^s \rangle|.
\]
by a multiplicative factor smaller than $q^d$. In particular, for every $\btheta\in\R^d$, the set
\[
BA_{\btheta}\left(\br\right) \myeq \left\{ \bgamma\in\R^d\sep c_{\br}\left(\btheta,\bgamma\right) > 0 \right\},
\]
equals $BA_{\btheta}\left(\bg_{\br}\right)$.
\end{remark}

\subsection{Main Results}
In this paper, we prove the function fields analogue of Theorem~\ref{thm:PM} and determine the value of the function fields analogue of \eqref{eq:FFcasselsConstantTrue}. More precisely, we show:
\begin{thm}\label{thm:TsengAnalog}
$BA_{\btheta}(\bg)\neq\varnothing$ for every generalized weight $\bg$ and $\btheta\in\R^d$.
Moreover, if
\begin{equation}\label{eq:moreover1}
\inf_{h\in\bbn} \frac{\min\bg\left(h\right)}{h} > 0
\end{equation}
then $\dim\left(BA_{\btheta}(\bg)\right)=d$ for every $\btheta\in\R^d$.
\end{thm}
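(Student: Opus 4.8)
The plan is to treat the two assertions separately but with a common engine: an explicit, combinatorial construction of points $\bgamma$ (and, for the dimension statement, a large Cantor-type family of them) that ``dodge'' every potential approximation $N\theta^s$. Write elements of $\R$ via their coefficient sequences in $\bbf_q$; a key simplification in positive characteristic is that $|\langle N\theta^s-\gamma^s\rangle|$ is an ultrametric quantity determined by the first few coefficients of $N\theta^s-\gamma^s$, so the condition ``$|N|^{g^s(\deg N)/\deg N}|\langle N\theta^s-\gamma^s\rangle| > \delta$'' for a fixed $\delta>0$ is a finite, \emph{open and closed} condition on $\bgamma$ depending only on $N$ and on finitely many coefficients of $\bgamma^s$. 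This is the fundamental structural advantage over the real case and is what makes both parts elementary.

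First I would prove nonemptiness. Fix $\btheta$ and a generalized weight $\bg$. For each nonzero $N$ of degree $h$, the ``forbidden'' set $F_N=\{\bgamma : \max_s |N|^{g^s(h)/h}|\langle N\theta^s-\gamma^s\rangle|\le \delta\}$ is, in each coordinate $s$, a union of balls in $I$ of radius roughly $q^{-g^s(h)}\delta$; since $\sum_s g^s(h)=h$, the total ``measure'' swept out by all $N$ with $\deg N=h$ is controlled by $q^{h}\cdot\prod_s(q^{-g^s(h)}\delta) = \delta^{d}$ up to constants, which for $\delta$ small is summable over $h$ or at least bounded below $1$. Hence for suitable $\delta$ the complement $\bigcap_N (I^d\setminus F_N)$ is nonempty; one should do this carefully as a greedy/inductive construction over $h$, at stage $h$ choosing finitely many more coefficients of each $\bgamma^s$ so as to avoid the finitely many balls coming from polynomials of degree $\le h$, and checking that enough freedom always remains. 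The nonemptiness half uses no hypothesis on $\bg$ beyond the weight axioms.

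For the dimension statement, under hypothesis \eqref{eq:moreover1}, i.e. $\min\bg(h)\ge \alpha h$ for some $\alpha>0$, I would upgrade this construction to build inside $BA_{\btheta}(\bg)$ a Cantor set of arbitrarily large dimension by the standard mass-distribution / Hausdorff dimension lower-bound technique (as in \cite{Falconer}). One fixes a sparse increasing sequence of ``active scales'' $h_1<h_2<\cdots$; at scale $h_k$ one refines each surviving cylinder by prescribing a block of new coefficients, keeping a definite positive proportion of the $q$-ary choices so as to avoid all the (finitely many, by ultrametricity and by the degree cutoff) forbidden balls coming from polynomials $N$ with $\deg N$ in the window $(h_{k-1},h_k]$. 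The linear growth $\min\bg(h)\ge\alpha h$ guarantees that the forbidden balls are genuinely small in \emph{every} coordinate simultaneously, so the surviving proportion at each stage can be kept close to $1$; taking the $h_k$ sufficiently lacunary makes the resulting Cantor set have dimension $\ge d-\eps$ for any prescribed $\eps$, hence dimension exactly $d$ since $BA_{\btheta}(\bg)\subseteq \R^d$. By Remark \ref{rem:genWeight} this also recovers the weighted statement $\dim BA_{\btheta}(\br)=d$ for every genuine weight $\br$, since any such $\br$ satisfies \eqref{eq:moreover1} with $\alpha$ depending on $\min_s r^s$ when all $r^s>0$, and the coordinates with $r^s=0$ contribute a full factor and can be handled trivially.

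The main obstacle I anticipate is the bookkeeping in the dimension lower bound: one must choose the active scales $h_k$ and the number of prescribed coefficients per stage so that (a) every polynomial $N$ is ``dealt with'' at some stage, (b) the number of forbidden balls introduced at stage $k$ is small relative to the number of available refinements, and (c) the branching ratios multiply out to the desired dimension. Balancing (a)--(c) against the generalized weight $\bg$ — whose coordinate functions $g^s$ can be quite irregular subject only to monotonicity, the sum condition, and \eqref{eq:moreover1} — is where the real work lies; the Diophantine input (ultrametricity, the product-of-radii count $\delta^d$) is soft, but propagating a uniform positive lower bound on $\ch(\btheta,\bgamma)$ through infinitely many nested refinements without the surviving set collapsing requires care. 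Everything else — the finiteness/clopen nature of the conditions, the reduction from $\bg_\br$ to $\br$ — is routine given the dictionary of Section \ref{subsec:functionFieldAnalog}.
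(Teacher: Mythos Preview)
Your framework---exploit ultrametricity so that each approximation condition becomes a finite clopen constraint on coefficients, then build a Cantor set avoiding the forbidden balls---is exactly the paper's. But the step you label as mere ``bookkeeping'' hides a genuine obstruction that your measure heuristic does not overcome.

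The estimate that the forbidden region from all $N$ of degree $h$ has total measure $\asymp\delta^d$ is a \emph{global} bound on $I^d$; the greedy/Cantor construction needs a \emph{uniform per-parent} bound, and these are not the same. The forbidden points $\pi_{\bg(h+1+\ell)}(\langle N\btheta\rangle)$, $\deg N\le h$, form the image of a linear map $\bbf_q^{h+1}\to\bbf_q^{h+1+\ell}$, and nothing prevents this image from swallowing an entire parent cylinder that was legitimately chosen at the previous stage. Concretely: take $d=1$, $\ell=1$, and $\theta$ with $\theta_1=\theta_2=0$, $\theta_3\ne 0$. At stage $h=0$ the only forbidden point is $(0,0)\in\bbf_q^2$, so $v=(0,1)$ is a valid greedy choice. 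At stage $h=1$ the forbidden children of $v$ are $\pi_3(\langle(at+b)\theta\rangle)=(0,a\theta_3,a\theta_4+b\theta_3)$ with $a\theta_3=1$; as $b$ ranges over $\bbf_q$ the third coordinate hits every value, so \emph{all} $q$ children are forbidden and the construction dies. Lacunarity in the $h_k$ does not help: the obstruction is algebraic, and coarsening the steps only aggregates more such collisions.

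The paper's proof supplies exactly the missing idea. Writing the approximation condition as the linear system $\Delta[h{+}1{+}\ell,h{+}1]\,\bn=\pi_{\bg(h+1+\ell)}(\bgamma)$, it does \emph{not} step through $h$ one at a time; instead it constructs indices $i_m,j_m$ from the ranks of the $\Delta[i,j]$ so that $\mathrm{rank}\,\Delta[i_m,j]<i_m$ for every $j<j_{m+1}$. All the relevant column spaces then lie in a single hyperplane $\{v:\bb_m^t v=0\}$, and imposing $\bb_m^t\pi_{\bg(i_m)}(\bgamma)\ne 0$ disposes of all $N$ with $\deg N<j_{m+1}$ at once while removing exactly a $1/q$ fraction of children, uniformly over every parent. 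The rank inequalities $i_{m+1}\ge i_m+\ell$ and $i_m\le j_m+\ell$ (Lemma~\ref{lem:indConstruction}) then feed directly into the tree-like dimension formula. In the example above this scheme forces $\gamma_1\ne 0$ at the very first step---precisely the ``look-ahead'' your greedy lacks. Your proposal correctly identifies the shape of the argument but is missing this rank-based choice of scales, without which neither nonemptiness nor the dimension lower bound is established.
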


\begin{thm}\label{thm:CasAnalog}
Any generalized weight $\bg$ satisfies $\ch = q^{-2}$.
\end{thm}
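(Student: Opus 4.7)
The proof splits into matching upper ($\ch\le q^{-2}$) and lower ($\ch\ge q^{-2}$) bounds, and each is an existence statement, so the plan is to construct explicit witnesses in each direction.

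For the upper bound, I would exhibit a specific $\btheta\in\R^d$ with $\sup_{\bgamma}\chtg\le q^{-2}$. In the case $d=1$ with $g(h)=h$, a natural candidate is $\theta=[0;a_1,a_2,\ldots]$ where every partial quotient $a_k$ is a polynomial of degree $2$; the denominators $Q_k$ of the convergents then satisfy $|Q_k|=q^{2k}$ and $|Q_k\theta-P_k|=q^{-2k-2}$. For an arbitrary $\gamma\in\R$, its Ostrowski expansion $\gamma=\sum_{j\ge 0}b_j(Q_j\theta-P_j)$ has digits $b_j\in\Z$ with $|b_j|<q^2$; setting $N=\sum_{j\le k}b_jQ_j$ for the smallest $k$ with $b_{k+1}\ne 0$ yields $|N|\le q^{2k+1}$ and, by the ultrametric triangle inequality, $|\langle N\theta-\gamma\rangle|\le q^{-2k-3}$, so $|N|\cdot|\langle N\theta-\gamma\rangle|\le q^{-2}$. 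For general $\bg$ in dimension $d$, the plan is to choose each $\theta^s$ as a continued fraction whose partial-quotient degrees are tailored so that the $s$-th coordinate furnishes precision $q^{-g^s(h)-2}$ when the truncation degree is $h$, and to run the Ostrowski argument in all coordinates simultaneously.

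For the lower bound, fix $\btheta\in\R^d$ and construct $\bgamma$ one Laurent-coefficient at a time, in a Cantor-style inductive construction. At stage $k$, the newly chosen digits of $\bgamma$ must avoid, for every $N\ne 0$ satisfying $g^s(\deg N)+2=k$ in some coordinate $s$, the unique value that would force $(N\theta^s)_i=\gamma^s_i$ for all $i\le g^s(\deg N)+2$ and every $s$. This is a linear condition on the new digits, and a rank analysis of the Hankel-type matrices built from the Laurent coefficients of $\btheta$ shows that the forbidden set at each stage is a proper subset of the $q^d$ available extensions, so the induction never terminates and the compact limit furnishes the required $\bgamma$.

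The principal obstacle lies in the rank analysis for the lower bound: when the Hankel matrices at some level become rank-deficient, the forbidden set threatens to fill every available extension, and one must exploit the specific Hankel/Toeplitz structure---or split into the cases $\btheta\in\Q^d$ (where the orbit $\Z\btheta$ is finite modulo $\Z^d$ and $\bgamma$ can be chosen at a uniformly large distance from it) and $\btheta\notin\Q^d$---to bound the forbidden set uniformly away from the entire $q^d$-alphabet.
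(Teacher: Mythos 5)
Your outline splits the theorem into the two inequalities in the same way the paper does, but the lower bound, which is where the paper's real work lies, has an acknowledged but unresolved gap, and the upper bound takes a different route whose higher-dimensional extension is not obvious.

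For the lower bound $\ch\geq q^{-2}$, you propose a level-by-level Cantor construction and correctly identify the obstruction: when the Hankel blocks $\Delta[i,j]$ are rank-deficient in an unfavorable way, the linear constraint you want to impose on the new digit of $\bgamma$ may be supported entirely on already-chosen digits, so the forbidden set can be all of $\bbf_q^{d}$ (or all of $\bbf_q$, since a generalized weight only adds one new coordinate per level). You suggest ``exploit the Hankel/Toeplitz structure'' or split into $\btheta\in\Q^d$ versus $\btheta\notin\Q^d$, but neither suggestion is carried out, and the rational/irrational dichotomy does not by itself fix the problem: irrational $\theta$ can still have $\Delta[m,m]$ singular for infinitely many $m$ (see Remark~\ref{rem:Ainfty}). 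The paper's resolution is the indices construction of Section~\ref{subsec:IndicesCons}: one does not impose a constraint at every level, but waits until the rank jumps in a controlled way, choosing levels $i_0<i_1<\cdots$ so that the left kernel of $\Delta[i_m,j]$ (for the relevant range of $j$) is nontrivial at level $i_m$ while being trivial at $i_{m-1}$; this forces the kernel vector $\bb_m$ to have a nonzero entry among the coordinates $i_{m-1}+1,\dots,i_m$, which is exactly what guarantees the constraint touches the new digits and never contradicts earlier choices (see Lemma~\ref{lem:indConstruction} and the proof of Proposition~\ref{prop:mainProp}). Without this mechanism, or an explicit substitute, your construction may terminate and the lower bound fails.

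For the upper bound $\ch\leq q^{-2}$, your Ostrowski-expansion argument with a $\theta$ whose partial quotients all have degree $2$ is a genuinely different and plausible route in dimension one: it uses the ultrametric inequality together with the bound $|Q_k\theta-P_k|=q^{-2k-2}$ to squeeze $|N|\cdot|\langle N\theta-\gamma\rangle|\leq q^{-2}$. (Watch the edge cases: the $N$ you take must be nonzero, so the index $k$ should be chosen relative to the first nonzero Ostrowski digit, not as written.) However, the extension to $d>1$ and a general generalized weight $\bg$ is stated as a ``plan'' but not executed, and it is not clear how to pick continued fractions for the $d$ coordinates so that a single polynomial $N$ serves all coordinates simultaneously with the right weighted precisions; the interplay between the coordinates under a shared denominator sequence is exactly where the difficulty sits. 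The paper sidesteps continued fractions entirely: Proposition~\ref{prop:CasUpperBound1} shows that $c(\theta)\geq q^{-1}$ forces the Hankel determinants $\det\Delta[m,m]$ to be nonzero on an initial segment and zero afterward, and then any $\theta$ breaking that pattern, e.g.\ $\theta_1=0,\ \theta_2\neq0$, satisfies $c(\theta)\leq q^{-2}$; the proof of Theorem~\ref{thm:CasAnalog} gives the analogous explicit choices of $\btheta$ in higher dimensions. That argument is shorter, requires no continued-fraction machinery, and generalizes to arbitrary $\bg$ with no extra work, which is why the paper uses it.
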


\begin{remark}
It should be mentioned that \cite{Ar,Ag} deal with a related question concerning products of linear forms. Assume $F_i(x,y)=a_ix+b_iy$, $i\in\{1,2\}$, are two linear forms with coefficients $a_i,b_i\in\R$. Using the methods of \cite{D,C52}, it was proven that:
\begin{equation}\label{eq:ArmAgg}
\sup_{\gamma,\delta\in\R}\inf_{x,y\in\Z}\left\lvert F_1\left(x+\gamma,y+\delta\right)\right\rvert\cdot\left\lvert F_2\left(x+\gamma,y+\delta\right)\right\rvert = \left|a_1b_2-a_2b_1\right|q^{-2}.
\end{equation}
where the upper bound has been already proved in \cite[p. 519]{Mahler}. Given any $\theta\in\R$, take $F_1=\theta x+y$ and $F_2=x$, and plug them into \eqref{eq:ArmAgg} to obtain:
\begin{equation}\label{eq:ArmAggApproximation}
\sup_{\gamma,\delta\in\R}\inf_{N\in\Z}\left\lvert N+\gamma\right\rvert\cdot \left\lvert\left\langle N\theta+\delta+\gamma\theta\right\rangle\right\rvert = q^{-2}.
\end{equation}
Note that forcing $\gamma=0$ and $N\neq0$ can a priori make the left hand side of \eqref{eq:ArmAggApproximation} bigger or smaller, so one cannot apply \eqref{eq:ArmAggApproximation} directly in order to estimate $c(\theta)$.
\end{remark}
\section{Measure and dimension}\label{sec:MeasureAndDim}
In order to prove $BA_{\btheta}$ has the same Hausdorff dimension as $\R^d$, we will construct subsets of it by nested intersection. In this section we recall a general criterion which gives rise to a lower bound on the Hausdorff dimension of such intersections.

\subsection{Tree-like collections}\label{subsec:treeLike}
Let $X$ be a complete metric space with a metric $\rho$, and let $\mu$ be a Borel measure on $X$. Following the terminology of \cite{KW2}, a collection $\calc$ of compact subsets of $X$ is called \emph{tree-like} if there exists a sequence of collections $\{\calc_m\}_{m=0}^\infty$ such that $\calc=\bigcup_{m=0}^\infty\calc_m$ which satisfy the following conditions:
\begin{enumerate}
	\item $\calc_{0}=\{C_0\}$, with $C_0\sub X$ compact.
	\item $\mu(C)>0$ for any $C\in \calc$.
	\item For any $m\in\bbn$ and $C,C'\in\calc_m$, either $C=C'$ or $\mu\left(C\cap C'\right)=0$.
	\item For any $m\in\bbn$ and $C\in\calc_{m+1}$, there exists $C'\in\calc_m$ such that $C\sub C'$.
	\item For any $m\in\bbn$ and $C'\in\calc_{m}$, there exists $C\in\calc_{m+1}$ such that $C\sub C'$.
\end{enumerate}
Given a tree-like collection $\calc=\bigcup_{m=0}^\infty\calc_m$ we define its limit set to be
$$C_\infty=\bigcap_{m=0}^\infty\bigcup_{C\in\calc_m}C.$$
For each $m\in \bbn$ define
$$\rho_m=\sup_{C\in\calc_m}\rho\left(C\right),$$
where $\rho\left(C\right)=\max_{x,y\in C}\rho(x,y)$, and
$$D_m=\inf_{C'\in\calc_m}\frac{\mu\left(\bigcup_{C\in\calc_{m+1},\;C\sub C'}C\right)}{\mu(C')}.$$
A tree like collection is said to be \emph{strongly tree-like} if, in addition:
\begin{enumerate}
\item[6. ]$\rho_m\mto 0$.
\end{enumerate}
The following is a specific case of \cite[Lemma~2.5]{KW2}:
\begin{thm}\label{thm:treeLike}
Let $X$ be a complete metric space with a metric $\rho$, and $\mu$ be a Borel measure. Assume that there exist constants $c,\alpha>0$ such that
\begin{equation}\label{eq:powerLaw}
\mu(B(x,r))\geq cr^\alpha,
\end{equation}
for any $x\in X$ and $0<r<1$. Then any strongly tree-like collection $\calc=\bigcup_{m=0}^\infty\calc_m$ satisfies
$$\dim{C_\infty} \geq \alpha - \limsup_{m\to\infty}\frac {\sum_{k=0}^m\log D_k}{\log \rho_m}.$$
\end{thm}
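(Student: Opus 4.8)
The plan is to derive the dimension bound from the mass distribution principle, by constructing the natural probability measure $\nu$ supported on $C_\infty$ and estimating its values on small balls. Define $\nu$ on the cells of $\calc$ by setting $\nu(C_0)=1$ and, recursively, for $C'\in\calc_m$ and a child $C\in\calc_{m+1}$ with $C\subseteq C'$,
\[
\nu(C)\;=\;\nu(C')\cdot\frac{\mu(C)}{\mu\!\left(\bigcup\left\{C''\in\calc_{m+1}\sep C''\subseteq C'\right\}\right)}.
\]
Property (2) guarantees that the denominators are positive, property (3) ensures that the $\nu$-masses of the children of $C'$ sum to $\nu(C')$ (they meet pairwise in $\mu$-null, hence $\nu$-null, sets), and properties (4)--(5) make the assignment consistent; a standard weak-$*$ limiting argument then produces a Borel probability measure $\nu$ on $X$ realising these values, and since every point outside $C_\infty$ eventually leaves $\bigcup_{C\in\calc_m}C$, we obtain $\nu(C_\infty)=1$.

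Next I would estimate $\nu$ on cells: telescoping the definition along the chain of ancestors $C=C^{(m)}\subseteq\dots\subseteq C^{(0)}=C_0$ of a cell $C\in\calc_m$ and bounding each denominator from below by $D_k\,\mu(C^{(k)})$ (the very definition of $D_k$), one gets $\nu(C)\leq\frac{\mu(C)}{\mu(C_0)}\prod_{k=0}^{m-1}D_k^{-1}$. Now fix $x\in C_\infty$ and a small $r>0$, and (using property 6) let $m=m(r)$ be one more than the largest index $j$ with $\rho_j\geq r$, so $\rho_m<r\leq\rho_{m-1}$. Every $C\in\calc_m$ that meets $B(x,r)$ has diameter $\rho(C)\leq\rho_m<r$, hence $C\subseteq B(x,2r)$, and since such cells are pairwise $\mu$-null their $\mu$-masses add up to at most $\mu(B(x,2r))$. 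Combining these facts with the cell estimate and the power law \eqref{eq:powerLaw} for $\mu$,
\[
\nu\!\left(B(x,r)\right)\;\leq\!\!\sum_{\substack{C\in\calc_m\\ C\cap B(x,r)\neq\varnothing}}\!\!\nu(C)\;\leq\;\frac{\prod_{k=0}^{m-1}D_k^{-1}}{\mu(C_0)}\;\mu\!\left(B(x,2r)\right)\;\lesssim\;r^{\alpha}\prod_{k=0}^{m-1}D_k^{-1}.
\]
Taking logarithms, dividing by $\log r$, and using that $\log r$ is squeezed between $\log\rho_m$ and $\log\rho_{m-1}$, one gets, after letting $r\to0$ so that $m(r)\to\infty$,
\[
\liminf_{r\to0}\frac{\log\nu\!\left(B(x,r)\right)}{\log r}\;\geq\;\alpha-\limsup_{m\to\infty}\frac{\sum_{k=0}^{m}\log D_k}{\log\rho_m},
\]
uniformly in $x\in C_\infty$; the mass distribution principle then yields $\dim C_\infty\geq\alpha-\limsup_{m\to\infty}\frac{\sum_{k=0}^m\log D_k}{\log\rho_m}$.

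I expect the main obstacle to be the last part of the ball estimate: converting the purely combinatorial bound on $\nu$ of a cell into a genuine power law $\nu(B(x,r))\lesssim r^{s}$ with $s$ as close as possible to $\alpha-\limsup\frac{\sum\log D_k}{\log\rho_m}$. This is where one has to choose the level $m(r)$ with care --- small enough that the level-$m(r)$ cells fit inside $B(x,2r)$, yet large enough that $\log r$ and $\log\rho_{m(r)}$ stay comparable --- and where one needs the control on $\mu(B(x,2r))$ coming from \eqref{eq:powerLaw}. A secondary, routine point is to check that the construction of $\nu$ is legitimate, i.e. that the $\mu$-null overlaps between same-level cells are $\nu$-null, so that the prescribed cell values are consistent and $\nu(C_\infty)=1$.
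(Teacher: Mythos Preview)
The paper does not give its own proof of this statement; it is simply quoted as a special case of \cite[Lemma~2.5]{KW2} and used as a black box. Your mass--distribution approach is the standard route to such results, and the construction of $\nu$, the telescoping estimate
\[
\nu(C)\;\le\;\frac{\mu(C)}{\mu(C_0)}\prod_{k=0}^{m-1}D_k^{-1}\qquad(C\in\calc_m),
\]
and the choice of level $m=m(r)$ with $\rho_m<r\le\rho_{m-1}$ are all carried out correctly.

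There is, however, a genuine gap at the step where you invoke the power law. From
\[
\nu\bigl(B(x,r)\bigr)\;\le\;\frac{\mu\bigl(B(x,2r)\bigr)}{\mu(C_0)}\prod_{k=0}^{m-1}D_k^{-1}
\]
you pass to $\nu(B(x,r))\lesssim r^{\alpha}\prod_{k<m}D_k^{-1}$, and this requires an \emph{upper} bound $\mu(B(x,2r))\le C\,r^{\alpha}$. The hypothesis~\eqref{eq:powerLaw} as stated in the paper is the \emph{lower} bound $\mu(B(x,r))\ge c\,r^{\alpha}$, which gives no control whatsoever on $\mu(B(x,2r))$ from above; with only that inequality the estimate goes the wrong way after dividing by $\log r<0$. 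In Kleinbock--Weiss the measure is assumed to obey a two--sided power law $c_1 r^{\alpha}\le\mu(B(x,r))\le c_2 r^{\alpha}$, and it is the upper half that feeds into the mass--distribution principle. So your argument is the right one for the Kleinbock--Weiss lemma, and it does apply in the paper's setting (for $\mu^d$ on $\R^d$ one has $\mu^d(B(\btheta,r))\le q^d r^d$ as well as the lower bound), but it does not prove the theorem under~\eqref{eq:powerLaw} as literally written. You should either note that the upper inequality is what is actually needed and used, or supply a separate argument that extracts the dimension bound from the lower inequality alone.
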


\subsection{A metric and a measure on $\R^d$}\label{subsec:metricAndMeasure}
We shall make use of the standard metric and measure on $\R$, which will be denoted by $\rho$ and $\mu$ respectively. The metric $\rho$ is defined by $\rho(\theta,\varphi)=|\theta-\varphi|$, for all $\theta,\varphi\in\R$, where $|\cdot|$ stands for the absolute value on $\R$, as described in Section \ref{subsec:functionFieldAnalog}. Note that the balls of this metric are of the form
\[\label{eq:ball}
B\left(\theta,q^{-\ell}\right)=\theta + t^{-\ell}I,
\]
for $\ell\in\bbz$ and $\theta\in\R$. The measure $\mu$ is the Haar measure on $\R$, normalized by $\mu(I)=1$. This measure is characterized by assigning a measure $q^{-\ell}$ to any ball of radius $q^{-\ell}$, and by being invariant under addition.

The metric and the measure on $\R^d$ are defined by
$$\rho^d(\btheta,\bphi)=\max_{1\leq s\leq d}\rho(\theta^s,\varphi^s),$$
for all $\btheta,\bphi\in\R^d$ and
$$\mu^d=\mu\times\ldots\times\mu,$$
$d$ many times. Note that for any $\ell\geq0$,
$$\mu^d\left(B\left(\btheta,q^{-\ell}\right)\right) = q^{-d\ell},$$
and that whenever $q^{-\ell-1} < r \leq q^{-\ell}$, we have
$$B\left(\btheta,r\right) = B\left(\btheta,q^{-\ell}\right).$$
This proves that $\mu^d$ satisfies (\ref{eq:powerLaw}) with $c=1$ and $\alpha=d$.

\subsection{Cantor constructions in $\R^d$}\label{subsec:CantorConstructions}
In this section we describe a construction of a tree-like collection in $\R^d$, which we refer to as a Cantor construction. To introduce the construction, we need some additional notation; For any vector of non negative integers $\bell=(\ell^1,\ldots,\ell^d)$, denote
$$\overline{\bell}=\sum_{s=1}^d\ell^s,$$
and $\bbf_q^{\bell}=\bbf_q^{\overline{\bell}}$. Let $\pi_{\bell}:\R^d\to\bbf_q^{\bell}$ be the projection defined by
\[
\pi_{\bell}(\btheta)\myeq\left(\theta_1^1,\ldots,\theta_{\ell^1}^1,\ldots,\theta_1^d,\ldots,\theta_{\ell^d}^d\right)^t.
\]
For convenience, we denote $\bbf_q^0=\{\varnothing\}$ and $\pi_{(0,\ldots,0)}(\btheta)=\varnothing$ for any $\btheta\in\R^d$. By abuse of notation, let us use $\pi_{\bell}$ to denote the projection to the first $\bell$ coordinates from $\bbf_q^{\bell'}$ to $\bbf_q^{\bell}$, whenever $\bell'\geq\bell$, where this inequality should be understood
coordinatewise. For any $\bv\in\bbf_q^{\bell}$ define the cylinder of $\bv$ by
$$[\bv]\myeq\left\{\btheta\in I^d\sep\pi_{\bell}(\btheta) = \bv \right\},$$
and denote $\bell=\bell(\bv)$. For any $\bh\in\bbn^d$, denote $q^{-\bh}=\left(q^{-h^1},\ldots,q^{-h^d}\right)$. Given a collection of cylinders $\calc$, define
$$q^{-\bh}\calc\myeq\left\{[\bu]\sep[\pi_{\bell}(\bu)]\in\calc,\;\bell=\bell(\bu)-\bh\right\}.$$

Assume $\left(\bell_m\right)_{m=0}^\infty$ is a sequence of $d$ dimensional non negative integer vectors.
Let $\left(\ell_m'\right)_{m=0}^\infty$ be any sequence of non negative integers satisfying $\ell_m'<\overline{\bell_m}$ for all $m$. \sloppy Define a $\left(\left(q^{\bell_m}\right)_{m=0}^\infty,\left(q^{\ell_m'}\right)_{m=0}^\infty\right)$ \textit{Cantor construction} as a set $\left\{\calc_m\sep m\geq0\right\}$ satisfying $\calc_0 = \{I^d\}$, 
$$\calc_{m+1} \sub q^{-\bell_m}\calc_m,$$
and
$$ \left\lvert q^{-\bell_m}\left\{C\right\}\setminus \calc_{m+1}\right\rvert = q^{\ell_m'},$$
for every $m\geq0$ and $C \in \calc_{m}$.
The \textit{limit set} of such a construction is the set
$C_{\infty} = \bigcap_{m=0}^\infty\bigcup_{C\in\calc_m}C $, which we call a $\left(\left(q^{\bell_m}\right)_{m=0}^\infty,\left(q^{\ell_m'}\right)_{m=0}^\infty\right)$ \textit{Cantor set}. If the sequences $\left(\bell_m\right)_{m=0}^\infty,\;\left(\ell_m'\right)_{m=0}^\infty$ are constant, and equal, say, to $\bell, \ell'$ respectively, then we shall call such a set a $\left(q^{\bell},q^{\ell'}\right)$ Cantor set.

\subsection{Measure and dimension of Cantor constructions}\label{subsec:cantorConstruction}
First note that for any $\left(\left(q^{\bell_m}\right)_{m=0}^\infty,\left(q^{\ell_m'}\right)_{m=0}^\infty\right)$ Cantor construction $\left\{\calc_m\sep m\geq0\right\}$, for any $m\geq0$, we have
\[
\mu\left(\bigcup_{C\in\calc_{m+1}}C\right)=\frac {q^{\overline{\bell_m}}-q^{\ell_m'}}{q^{\overline{\bell_m}}}\mu\left(\bigcup_{C\in\calc_{m}}C\right).
\]
This follows from the fact that $\calc_{m+1}$ is composed of equal length cylinders which, therefore, have the same measure. This provides an expression for $\mu(C_\infty)$, and shows that if $\lvert\overline{\bell_m}-\ell_m'\rvert$ is bounded then $$\mu(C_\infty)=0.$$

We now apply Theorem \ref{thm:treeLike} to get a lower bound on the dimension of Cantor sets:
\begin{thm}\label{thm:CantorDim}
Assume $C_\infty$ is a $\left(\left(q^{\bell_m}\right)_{m=0}^\infty,\left(q^{\ell_m'}\right)_{m=0}^\infty\right)$ Cantor set. If
\begin{equation}\label{eq:stronglyTreeLike}
\min \sum_{k=0}^{m-1}\bell_k\mto \infty,
\end{equation}
then
\[\label{eq:HausdorffDim}
\dim (C_\infty)\geq d-\limsup_{m\to\infty}\frac {m+1}{\min \sum_{k=0}^{m-1}\bell_k}\frac{ \log {\frac {q}{q-1}}}{\log q}.
\]
\end{thm}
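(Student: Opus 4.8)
The plan is to apply Theorem~\ref{thm:treeLike} directly to the strongly tree-like collection $\calc = \bigcup_{m\geq0}\calc_m$ underlying the Cantor construction, with $X = \R^d$, $\rho = \rho^d$, and $\mu = \mu^d$. We have already observed in Section~\ref{subsec:metricAndMeasure} that $\mu^d$ satisfies the power law \eqref{eq:powerLaw} with $c=1$ and $\alpha=d$, so the hypotheses of Theorem~\ref{thm:treeLike} are in place once we verify that $\calc$ is strongly tree-like. Conditions (1)--(5) of being tree-like are immediate from the definition of a Cantor construction: $\calc_0=\{I^d\}$ with $I^d$ compact; every $C\in\calc_m$ is a cylinder $[\bv]$, which is a translate of a ball and hence has positive $\mu^d$-measure; two distinct cylinders of the same level are disjoint; and the nesting conditions $\calc_{m+1}\sub q^{-\bell_m}\calc_m$ together with the cardinality requirement $|q^{-\bell_m}\{C\}\setminus\calc_{m+1}| = q^{\ell_m'} < q^{\overline{\bell_m}}$ guarantee both that each level-$(m{+}1)$ cylinder sits inside a level-$m$ cylinder and that each level-$m$ cylinder contains at least one level-$(m{+}1)$ cylinder.

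Next I would compute the two sequences $\rho_m$ and $D_m$ that enter the conclusion of Theorem~\ref{thm:treeLike}. Every cylinder $C\in\calc_m$ has, in its $s$-th coordinate, the first $\sum_{k=0}^{m-1}\ell_k^s$ digits prescribed, so $C$ is a product of balls of radii $q^{-\sum_{k=0}^{m-1}\ell_k^s}$; hence $\rho^d(C) = \max_s q^{-\sum_{k=0}^{m-1}\ell_k^s} = q^{-\min\sum_{k=0}^{m-1}\bell_k}$, and therefore $\rho_m = q^{-\min\sum_{k=0}^{m-1}\bell_k}$, which tends to $0$ precisely under hypothesis \eqref{eq:stronglyTreeLike}; this establishes condition (6), so $\calc$ is strongly tree-like. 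For $D_m$, fix $C'\in\calc_m$: the set $q^{-\bell_m}\{C'\}$ consists of $q^{\overline{\bell_m}}$ equal-measure subcylinders of $C'$, of which exactly $q^{\overline{\bell_m}} - q^{\ell_m'}$ survive into $\calc_{m+1}$, so $\mu^d\bigl(\bigcup_{C\in\calc_{m+1},\,C\sub C'}C\bigr)/\mu^d(C') = (q^{\overline{\bell_m}}-q^{\ell_m'})/q^{\overline{\bell_m}} = 1 - q^{\ell_m'-\overline{\bell_m}}$, independent of $C'$; thus $D_m = 1 - q^{\ell_m'-\overline{\bell_m}}$.

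Plugging these into Theorem~\ref{thm:treeLike} gives
\[
\dim(C_\infty) \;\geq\; d - \limsup_{m\to\infty}\frac{\sum_{k=0}^m \log\bigl(1-q^{\ell_k'-\overline{\bell_k}}\bigr)^{-1}}{\min\sum_{k=0}^{m-1}\bell_k},
\]
so it remains to bound the numerator. Here I would use the elementary estimate that since $\ell_k' \leq \overline{\bell_k}-1$, we have $q^{\ell_k'-\overline{\bell_k}} \leq q^{-1}$, and the function $x\mapsto \log(1-x)^{-1}$ is increasing, so each term $\log(1-q^{\ell_k'-\overline{\bell_k}})^{-1} \leq \log(1-q^{-1})^{-1} = \log\frac{q}{q-1}$. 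Hence $\sum_{k=0}^m \log(1-q^{\ell_k'-\overline{\bell_k}})^{-1} \leq (m+1)\log\frac{q}{q-1}$, and substituting this bound yields exactly \eqref{eq:HausdorffDim} after dividing numerator and denominator by $\log q$. The only mild subtlety — and the closest thing to an obstacle — is making sure the level-$m$ cylinders really are products of coordinate balls with the claimed radii, i.e.\ that the digit-prescription of a Cantor construction is literally "the first $\sum_{k<m}\ell_k^s$ digits in coordinate $s$"; this follows by unwinding the definition of $q^{-\bell}\calc$ and an easy induction on $m$, and once it is pinned down the rest is bookkeeping.
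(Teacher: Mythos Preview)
Your proof is correct and follows essentially the same route as the paper's: identify the Cantor construction as a strongly tree-like collection, compute $\rho_m = q^{-\min\sum_{k<m}\bell_k}$ and $D_m = 1 - q^{\ell_m'-\overline{\bell_m}} \geq (q-1)/q$, and plug into Theorem~\ref{thm:treeLike}. One small slip: in your displayed inequality after ``Plugging these into Theorem~\ref{thm:treeLike}'' the denominator should carry a factor of $\log q$ (since $\log\rho_m = -\log q\cdot\min\sum_{k<m}\bell_k$), which is exactly what makes your final ``divide numerator and denominator by $\log q$'' step work.
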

\begin{proof}
Let $\{\calc_m\sep m\geq0\}$ be the Cantor construction corresponding to $C_\infty$. So $\calc=\bigcup_{m=0}^\infty \calc_m$ is a tree-like collection. Moreover, we have that for every $m\geq0$,
$$\rho_m=q^{-\min \sum_{k=0}^{m-1}\bell_k},$$
and
$$D_m=\frac{q^{\overline{\bell_m}}-q^{\ell_m'}}{q^{{\overline{\bell_m}}}}= 1-q^{\ell_m'-\overline{\bell_m}} \geq \frac{q-1}{q}.$$
(\ref{eq:stronglyTreeLike}) implies that $\calc$ is strongly tree-like. By Theorem \ref{thm:treeLike}, we get
\begin{align*}
 \dim(C_\infty)
  & \geq  d - \limsup_{m\to\infty}\frac{\sum_{k=0}^m\log D_k}{\log \rho_m} \\
  & \geq  d - \limsup_{m\to\infty}\frac {m+1}{\min \sum_{k=0}^{m-1}\bell_k}\frac{ \log {\frac {q}{q-1}}}{\log q}.
\end{align*}
\end{proof}
\section{The One Dimensional Case}\label{sec:oneDim}

In this section we state and prove the one dimensional versions of Theorems \ref{thm:TsengAnalog} and \ref{thm:CasAnalog}. Our method of proof is inspired by \cite{DL}, and utilizes a characterization of the approximations of $\theta$ by means of solutions to a certain linear system of equations.

\subsection{The Corresponding Matrix of an Element in $\R$}\label{sec:correspondingMatrix}
Assume $\theta\in\R$ is a Laurent series, and $N=n_ht^h+...+n_0\in\Z$ is a polynomial of degree $h$. Then
\[
\langle N\theta \rangle =L_1(\theta)t^{-1}+L_2(\theta)t^{-2}+\cdots
\]
where for any $i\geq1$,
\[
L_i(\theta)=n_0\theta_i+n_1\theta_{i+1}+\cdots +n_h\theta_{i+h}.
\]

For any $\gamma\in\R$ and $\ell\geq0$, one has
\begin{equation}\label{eq:matixConditionInitial}
\begin{split}
\left|N\right|\cdot\left|\left\langle N\theta-\gamma\right\rangle \right|  <q^{-(1+\ell)} & \iff
\left|\left\langle N\theta-\gamma\right\rangle \right|<q^{-(h+1+\ell)} \\
&\iff L_i(\theta)=\gamma_i,\quad 1\leq i\leq h+1+\ell.
\end{split}
\end{equation}
In order to write the above linear system of equations in a matrix form, let us define $\Delta(\theta)$ to be the infinite matrix:
\[
\Delta(\theta)=\left(\begin{array}{ccc}
\theta_{1} & \theta_{2} & \cdots\\
\theta_{2} & \theta_{3} & \cdots\\
\vdots &  & \ddots
\end{array}\right).
\]
Denote the $i\times j$ sub-matrix of $\Delta(\theta)$:
\[
\Delta\left[i,j\right]=\left(\begin{array}{ccc}
\theta_{1} & \cdots & \theta_{j}\\
\vdots &  & \vdots\\
\theta_{i} & \cdots & \theta_{i-1+j}
\end{array}\right).
\]
In these notation, we may rewrite \eqref{eq:matixConditionInitial} as
\begin{equation}\label{eq:matrixCondition}
\left|N\right|\cdot\left|\left\langle N\theta-\gamma\right\rangle \right| < q^{-(1+\ell)}  \iff
\Delta\left[h+1+\ell,h+1\right]\cdot \bn =\pi_{h+1+\ell}\left(\gamma\right).
\end{equation}
Here $\bn$ is the coefficients vector of the polynomial $N$.

Consider the same matrix equation, where $\mathbf{n}$ is now a vector of variables. Note that the matrix $\Delta\left[h+1+\ell,h+1\right]$ is a $(h+1+\ell)\times(h+1)$ matrix. Therefore, for any $\ell > 0$ and any fixed $h$, there exists a $\gamma\in\R$ such that equation (\ref{eq:matrixCondition}) has no solutions $\bn\in\bbf_{q}^{h+1}$. This means that for any $N$ of degree $h$, $\left|N\right|\cdot\left|\left\langle N\theta-\gamma\right\rangle \right| \geq q^{-(1+\ell)}$. Our intent is to construct elements $\gamma\in \R$ for which $\ctg \geq q^{-(1+\ell)}$. This is equivalent to the equality on the right hand side of \eqref{eq:matrixCondition} to have no solutions for all $h\geq 0$ at once. To this end, we carefully analyze the rank of the non square submatrices $\Delta[i,j]$.

\begin{remark}\label{rem:Ainfty}
We mention that for $\theta$'s such that $\Delta[m,m]$ is invertible for all $m>0$, our construction is reduced to a slightly easier one. However, it should be noted that the set of $\theta$ for which this happens is a set of measure zero. Indeed, for $\theta\in \R$ and $m>0$,
\begin{equation*}
\det\left(\Delta[m+1,m+1]\right)=\det\left(\Delta[m,m]\right)\theta_{2m+1}+F(\theta_1,\cdots,\theta_{2m}),
\end{equation*}
where $F(\theta_1,\cdots,\theta_{2m})$ is an explicit polynomial which only involves $\theta_1,\cdots,\theta_{2m}$ of $\theta$ (and not $\theta_{2m+1}$).
Therefore, if $\det\left(\Delta[m,m]\right)\neq0$ for all $m$, then
$$\det\left(\Delta\left[m+1,m+1\right]\right)\neq0\;\iff\;\theta_{2m+1}\neq-\frac{F\left(\theta_1,\ldots,\theta_{2m}\right)}{\det\left(\Delta\left[m,m\right]\right)}.$$
Hence, the set of $\theta$'s for which $\Delta[m,m]$ is invertible for all $m>0$ is a $\left(q^2,q\right)$ Cantor set. As discussed in Section~\ref{subsec:cantorConstruction}, such sets have measure zero.
\end{remark}
\subsection{Indices Construction}\label{subsec:IndicesCons}
Given any $\theta\in\R$ and an integer $\ell>0$, we define the sequences of indices $\mathcal{I}_{\ell}=\{i_m\}_{m=0}^\infty,\, \mathcal{J}_{\ell}=\{j_m\}_{m=0}^\infty$ as follows:

\begin{enumerate}
	\item $j_{0}=0$, $i_{0}=\ell$.
	\item $j_{m+1}=\min \{j\sep\rank(\Delta[i_m,j])=i_m\}.$\\
		If this minimum is not obtained, we let $j_{m+1}=\infty$.
	\item If $j_{m+1}=\infty$, let $i_{m+1}=i_m$. Otherwise, define
		\[
		i_{m+1}=\min \{i\sep\rank(\Delta[i,j_{m+1}])=i-\ell\}.
		\]
\end{enumerate}
For convenience, we write $i_{-1}=0$. Note that if $\det(\Delta\left[m,m\right])\neq0$ for all $m>0$, then $i_m=(m+1)\ell$ and $j_m=m\ell$ for all $m\geq0$.
The following lemma summarizes some properties of these indices.

\begin{lemma}\label{lem:indConstruction}
Let $\mathcal{I}_{\ell},\mathcal{J}_{\ell}$ be as defined above. If $j_{m+1}<\infty$, then $i_{m+1}$ is defined, and the indices satisfy:
\begin{enumerate}
\item $i_{m+1} \leq j_{m+1}+\ell\label{eq:imjm}.$
\item $i_{m+1} \geq  i_{m}+\ell\label{eq:im}.$
\item $j_{m+1} \geq j_{m}+\ell\label{eq:jm}.$
\end{enumerate}
\end{lemma}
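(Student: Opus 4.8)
The plan is to analyze the rank conditions in the definitions of $i_{m+1}$ and $j_{m+1}$ directly, using only the combinatorial structure of how the Hankel matrices $\Delta[i,j]$ grow. First I would show that if $j_{m+1} < \infty$, then $i_{m+1}$ is well-defined, i.e. the minimum in condition (3) of the construction is attained. By definition of $j_{m+1}$, the matrix $\Delta[i_m, j_{m+1}]$ has rank $i_m$, meaning it is of full row rank; since it is an $i_m \times j_{m+1}$ matrix, this forces $j_{m+1} \geq i_m$. Now consider the $i \times j_{m+1}$ matrices $\Delta[i, j_{m+1}]$ as $i$ grows. For $i = i_m$ the rank is $i_m = i - \ell$ (recalling $i_0 = \ell$ and, inductively, that the "defect" $i_m - \ell$ equals the rank at the previous stage — more carefully, one tracks that $\rank \Delta[i_m, j_{m+1}] = i_m$ while we want rank $= i - \ell$, so at $i = i_m$ we'd need rank $i_m - \ell$, which is a contradiction unless $\ell = 0$; so I need to re-examine this). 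Let me instead argue: as $i$ increases from $i_m$, the rank of $\Delta[i, j_{m+1}]$ is non-decreasing and increases by at most $1$ each step, and it is bounded above by $j_{m+1}$. So the quantity $\rank(\Delta[i,j_{m+1}]) - (i - \ell)$ starts (at some appropriate $i$) nonnegative and eventually, once the rank stabilizes at its maximum value $\leq j_{m+1}$ while $i-\ell \to \infty$, becomes negative. Since this quantity changes by $0$ or ... actually by $+1$ minus $1 = 0$ or $-1$ at each step, it hits every intermediate integer value, in particular $0$. Hence $i_{m+1}$ is attained.

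For inequality (1), $i_{m+1} \leq j_{m+1} + \ell$: this is immediate once $i_{m+1}$ is shown to exist, because $\rank(\Delta[i_{m+1}, j_{m+1}]) = i_{m+1} - \ell$ and this rank is at most the number of columns $j_{m+1}$, giving $i_{m+1} - \ell \leq j_{m+1}$. For inequality (3), $j_{m+1} \geq j_m + \ell$: here I would use that $\Delta[i_m, j_m]$ has rank $i_m - \ell$ (this is the defining property of $i_m$ from the previous stage, for $m \geq 1$; for $m = 0$ one has $j_0 = 0$ and needs $j_1 \geq \ell$, which follows since $\Delta[i_0, j_1] = \Delta[\ell, j_1]$ has rank $\ell$, forcing $j_1 \geq \ell$). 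Since any $j_m$ columns of the Hankel matrix span a space of dimension $\leq i_m - \ell$ by this rank bound, adjoining one more column can raise the rank by at most $1$, so to reach full row rank $i_m$ we need at least $\ell$ additional columns: $j_{m+1} \geq j_m + \ell$. For inequality (2), $i_{m+1} \geq i_m + \ell$: the matrix $\Delta[i_m, j_{m+1}]$ has rank $i_m$ (full row rank), so for $i$ slightly larger than $i_m$, $\rank(\Delta[i, j_{m+1}])$ can be at most $i_m + (i - i_m)$ but is at least $i_m$; to have $\rank(\Delta[i, j_{m+1}]) = i - \ell$ we need $i - \ell \geq$ (the rank just below), and combined with $\rank \geq i_m$ at $i = i_m$ — more precisely, at $i = i_m$ the rank is $i_m > i_m - \ell$, so $i = i_m$ does not satisfy the defining equation; the rank $i - \ell$ only catches up once $i \geq i_m + \ell$ since the rank grows by at most $1$ per row while $i - \ell$ also grows by $1$ per row and starts $\ell$ below. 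This gives $i_{m+1} \geq i_m + \ell$.

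The main obstacle I anticipate is bookkeeping the inductive invariant cleanly: I must carry along the statement "$\rank(\Delta[i_m, j_m]) = i_m - \ell$ and $\rank(\Delta[i_m, j_{m+1}]) = i_m$" (suitably interpreted at $m=0$) as part of the induction, since inequalities (1)–(3) for step $m+1$ genuinely depend on the rank values established at step $m$, not just on the inequalities. The Hankel (constant-anti-diagonal) structure of $\Delta$ is not actually needed for these particular inequalities — only the elementary facts that rank is monotone in both $i$ and $j$, grows by at most $1$ when one row or column is added, and is bounded by $\min(i,j)$ — so I would state the argument in that generality to keep it transparent. Care is needed with the edge cases $m = 0$ (where $j_0 = 0$, $i_0 = \ell$, $i_{-1} = 0$) and with the convention $j_{m+1} = \infty$, under which (2) degenerates to $i_{m+1} = i_m \geq i_m + \ell$ only if $\ell = 0$ — so the hypothesis $j_{m+1} < \infty$ in the lemma is essential and I would flag that the three inequalities are asserted only in that case.
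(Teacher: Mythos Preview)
Your proposal is correct and follows essentially the same approach as the paper: both arguments rest purely on the elementary facts that $\rank(\Delta[i,j])\le\min(i,j)$, that adding a row or column changes the rank by $0$ or $1$, and on tracking the quantity $i-\rank(\Delta[i,j_{m+1}])$ (or equivalently your $\rank-(i-\ell)$) as $i$ varies to get existence and the bounds (1)--(2), and the column analogue for (3). Your observation that the Hankel structure of $\Delta$ plays no role here matches the paper exactly; the only cleanup needed is to drop the false start in the first paragraph and state directly that at $i=i_m$ one has $\rank(\Delta[i_m,j_{m+1}])-(i_m-\ell)=\ell>0$, which anchors the intermediate-value argument.
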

\begin{proof}
General facts about rank of matrices imply that
\begin{equation}\label{eq:rankUpperBound}
\rank(\Delta[i,j])\leq \min(i,j),
\end{equation}
and
\begin{equation}\label{eq:rankIncreasing}
\rank(\Delta[i,j])\leq\rank(\Delta[i + 1,j]),
\end{equation}
for every $i,j>0$. By the definition of $j_{m+1}$, one has that $i_m - \rank(\Delta[i_m,j_{m+1}]) = 0$. On the other hand, putting $i=j_{m+1} + \ell$ and $j=j_{m+1}$ in \eqref{eq:rankUpperBound} gives $\left(j_{m+1} + \ell\right) - \rank(\Delta[j_{m+1} + \ell,j_{m+1}]) \geq \ell$. By \eqref{eq:rankIncreasing}, any $i,j>0$ satisfy $(i+1) - \rank(\Delta[i+1,j])\leq i - \rank(\Delta[i,j]) + 1$.
Therefore, there exists some $i_m \leq i \leq j_{m+1} + \ell$ for which $i - \rank(\Delta[i,j_{m+1}]) = \ell$. We conclude that $i_{m+1}$ is well defined.
\begin{enumerate}
	\item By the definition of $i_{m+1}$ and the above discussion, it satisfies $i_{m+1}\leq j_{m+1}+\ell$.

    \item Since $\rank(\Delta[i_m,j_{m+1}])=i_m$, we have $i_m\leq j_{m+1}$. It follows that $\rank(\Delta[i,j_{m+1}])=i$ for any $i\leq i_m$, while if $i > i_m$, one has $\rank(\Delta[i,j_{m+1}])\geq i_m$. Using $\rank(\Delta[i_{m+1},j_{m+1}])=i_{m+1}-\ell$, we conclude that $i_{m+1}\geq i_m+\ell$.
	
	\item Note that $\rank(\Delta[i_m,j_m])=i_m-\ell$, and that for all $j\leq j_m$, one has that $\rank(\Delta[i_m,j])\leq i_m-\ell$. By the definition of $j_{m+1}$ as the minimal $j$ for which $\rank(\Delta[i_m,j])=i_m$, it follows that $j_{m+1}\geq j_m+\ell$.
\end{enumerate}	
\end{proof}
\begin{remark}
If $\theta\in\R$ is rational, i.e. $\theta\in\Q$, then there exists an $m$ for which $j_m=\infty$. Indeed, since $\theta$ is rational, its coefficients sequence is eventually periodic, i.e., there exist $m_0,p\in\bbn$ such that $\theta_{m}=\theta_{m+p}$ for all $m\geq m_0$. Therefore, whenever $j_m\geq m_0+p$, we must already have $j_m=\infty$. The implication holds in the other direction as well. Assume that $j_{m+1}=\infty$ for some $m\in\bbn$. Then there exists $0 \neq \bb\in\bbf_q^{i_m}$ such that $\bb^t\cdot (\Delta[i_m,j])=0$ for all $j>0$ at once. Since the $i$-th row of $\Delta(\theta)$ consists of the coefficients of $\left\langle t^{i-1}\theta\right\rangle$, this means that $\sum_{s=1}^{i_m}b^s t^{s-1}\theta$ is a polynomial. Therefore $\theta$ is rational (this argument appears in \cite[p.438]{H}).
\end{remark}
\subsection{Main Proposition}\label{subsec:mainProp}
The following proposition is the key ingredient of the proofs of our main results. To prove it, we make use of the indices constructed in Section~\ref{subsec:IndicesCons}. In fact, the construction of the indices serves as a way to bypass the fact that the matrices $\Delta[m,m]$ are not necessarily invertible.
\begin{prop}\label{prop:mainProp}For any $\theta\in\R$, $\ell>0$ consider the indices sequences $\mathcal{I}_{\ell},\mathcal{J}_{\ell}$ constructed in Section \ref{subsec:IndicesCons}. Let
$\Gamma_{\ell}$ be the set of $\gamma\in\R$ such that for any $m\geq 0$
and $0<j<j_{m+1}$, the equation
\begin{equation}\label{eq:gamma-1}
\Delta\left[i_{m},j\right]\cdot\mathbf{n}=\pi_{i_{m}}\left(\gamma\right)
\end{equation}
has no solutions. Then
\[
\dim\Gamma_{{\ell}}\geq 1 - \frac{1}{\ell}\frac{ \log {\frac {q}{q-1}}}{\log q}.
\]
\end{prop}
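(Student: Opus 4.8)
The plan is to exhibit $\Gamma_\ell$ (or a large subset of it) as the limit set of a Cantor construction in $\R$ and then to invoke Theorem~\ref{thm:CantorDim}. The key observation is that the defining conditions on $\gamma$ are ``local'': whether \eqref{eq:gamma-1} has a solution depends only on finitely many coefficients of $\gamma$, namely $\pi_{i_m}(\gamma)$, and by the rank properties in Lemma~\ref{lem:indConstruction} these windows $[1,i_m]$ are nested and grow by at least $\ell$ at each step. So I would build the tree level by level, where passing from level $m$ to level $m+1$ corresponds to choosing the coefficients $\gamma_{i_m+1},\dots,\gamma_{i_{m+1}}$ (a block of length $i_{m+1}-i_m \geq \ell$ by part~(2) of the lemma), subject to avoiding those choices that would make \eqref{eq:gamma-1} solvable for some $j$ with $i_m < $ (relevant index) $ < j_{m+1}$. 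I expect to only need to rule out the single ``new'' equation at $j$ just below $j_{m+1}$, or equivalently the condition $\Delta[i_{m+1}, j]\cdot\bn = \pi_{i_{m+1}}(\gamma)$ for the largest admissible $j<j_{m+1}$, since solvability for smaller $j$ follows by a projection/compatibility argument.

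Concretely, the heart of the matter is a counting statement: fix a cylinder $[\bv]$ at level $m$ (so $\bv = \pi_{i_m}(\gamma)$ is already chosen and consistent), and count how many extensions to $\pi_{i_{m+1}}(\gamma)$ force \eqref{eq:gamma-1}, with $i_m$ replaced by $i_{m+1}$ and $j = j_{m+1}-1$ (say), to become solvable. The number of solvable right-hand sides is at most the size of the column space of $\Delta[i_{m+1}, j_{m+1}-1]$, whose rank is at most $i_{m+1}-\ell$ (this is exactly why $i_{m+1}$ was defined as it was, and why $j_{m+1}$ is the first index where full row rank appears — for $j < j_{m+1}$ the rank stays $\le i_{m+1}-\ell$, which needs to be checked carefully using \eqref{eq:rankIncreasing} and the definitions). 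Hence among the $q^{i_{m+1}-i_m}$ choices of the new block of coefficients of $\gamma$, at most $q^{i_{m+1}-i_m - \ell}$ of them are ``bad'', leaving a proportion $\geq 1 - q^{-\ell}$ of ``good'' children. This is precisely the structure of a Cantor construction with $\ell_m = i_{m+1}-i_m$ and $\ell_m' = i_{m+1}-i_m-\ell$, so $\overline{\bell_m} - \ell_m' = \ell$ for every $m$.

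Once the Cantor structure is in place, the dimension bound is immediate: the strongly-tree-like hypothesis \eqref{eq:stronglyTreeLike} holds because $\sum_{k=0}^{m-1}\ell_k = i_m - i_0 = i_m - \ell \to \infty$ (using part~(2) of Lemma~\ref{lem:indConstruction}, $i_m \geq i_0 + m\ell$, which also gives the rate), and Theorem~\ref{thm:CantorDim} then yields
\[
\dim \Gamma_\ell \geq 1 - \limsup_{m\to\infty} \frac{m+1}{i_m - \ell}\cdot\frac{\log\frac{q}{q-1}}{\log q} \geq 1 - \frac{1}{\ell}\cdot\frac{\log\frac{q}{q-1}}{\log q},
\]
where the last inequality uses $i_m \geq (m+1)\ell$, hence $\frac{m+1}{i_m-\ell} \leq \frac{m+1}{m\ell} \to \frac{1}{\ell}$, and one checks the $\limsup$ is bounded by $\frac1\ell$ (the bound $i_m \geq (m+1)\ell$ follows by induction from $i_0 = \ell$ and part~(2)). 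I expect the main obstacle to be the bookkeeping in the counting step: one must verify that imposing the single no-solution condition at each level $m$ (for the appropriate $j$) is genuinely equivalent to, or at least implies, the no-solution condition for \emph{all} $0 < j < j_{m+1}$ and all $m' \le m$ simultaneously — i.e. that the conditions at different levels do not interfere and that solvability of \eqref{eq:gamma-1} for a smaller $j$ would propagate upward to a contradiction with the level where full rank is achieved. Handling the degenerate case $j_{m+1} = \infty$ (where no further constraint is imposed and the tree branches fully from then on, only helping the dimension) should be a quick separate remark.
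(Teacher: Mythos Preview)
Your plan—build a Cantor construction on the coefficients of $\gamma$ and invoke Theorem~\ref{thm:CantorDim}—is the paper's plan too, but the counting step has a real gap, not just bookkeeping. The problem is your pairing of $i_{m+1}$ with $j_{m+1}$. The $\Gamma_\ell$ condition at index $m$ pairs $i_m$ with $j_{m+1}$: one needs $\Delta[i_m,j]\cdot\bn=\pi_{i_m}(\gamma)$ unsolvable for all $0<j<j_{m+1}$. Your proposed replacement, ``$\Delta[i_{m+1},j]\cdot\bn=\pi_{i_{m+1}}(\gamma)$ unsolvable for $j<j_{m+1}$'', is strictly weaker, not equivalent: adding rows (going from $i_m$ to $i_{m+1}$) can only make a linear system harder to solve, so unsolvability of the taller system says nothing about the shorter one. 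Your ``projection/compatibility argument'' points the wrong way here, and with these conditions you cannot conclude $C_\infty\subseteq\Gamma_\ell$. If you repair the pairing and at step $m$ avoid instead the column space of $\Delta[i_m,j_{m+1}-1]$, the relevant rank bound is only $\le i_m-1$ (minimality of $j_{m+1}$), not $\le i_m-\ell$; your count of bad extensions then becomes $\le q^{(i_m-i_{m-1})-1}$, i.e.\ $\ell_m'=\ell_m-1$ rather than $\ell_m-\ell$. (This does not affect the final bound, which only needs $D_m\ge(q-1)/q$.)

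The paper sidesteps the whole compatibility issue by working with the \emph{left} kernel. Since $\rank\Delta[i_m,j]<i_m$ for every $j<j_{m+1}$ and the left null spaces shrink as $j$ grows, there is a single nonzero $\bb_m\in\bbf_q^{i_m}$ with $\bb_m^t\cdot\Delta[i_m,j]=\bzero^t$ for all such $j$ at once. Imposing the single linear condition $\bb_m^t\cdot\pi_{i_m}(\gamma)\ne 0$ then certifies unsolvability of \eqref{eq:gamma-1} for every $j<j_{m+1}$ simultaneously, with no cross-level checks needed. Because $\rank\Delta[i_{m-1},j_m]=i_{m-1}$, the vector $\bb_m$ must have a nonzero entry among its last $i_m-i_{m-1}$ coordinates, so exactly $q^{i_m-i_{m-1}-1}$ extensions of $\pi_{i_{m-1}}(\gamma)$ are discarded: a clean $\left((q^{i_m-i_{m-1}})_m,(q^{i_m-i_{m-1}-1})_m\right)$ Cantor construction, after which Theorem~\ref{thm:CantorDim} and $i_{m-1}\ge m\ell$ finish the job.
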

\begin{proof}
Let $\calc_0=\left\{ I \right\}$. For $m\geq0$, assume that $\calc_m$ is already defined.
By definition, $\rank(\Delta[i_m,j])\leq i_m-1$ for every $j<j_{m+1}$. Moreover, for every $j\leq j'$ we have
$$\left\{\bb\in\bbf_q^{i_m} \sep \bb^t \cdot \Delta[i_m,j']= \bzero^t\right\}\subseteq \left\{\bb\in\bbf_q^{i_m} \sep \bb^t \cdot \Delta[i_m,j]=\bzero^t\right\}.$$
Hence, there exists $\bzero \neq\bb_m\in\bbf_q^{i_m}$ such that
$$(\bb_m)^t\cdot \Delta[i_m,j]=\bzero^t,$$
for all $j<j_{m+1}$. Define:
\begin{equation*}
\mathcal{C}_{m+1}=\bigcup_{C\in\mathcal{C}_{m}}\left\{\pi_{i_{m}}^{-1}(v)\colon v\in\pi_{i_{m}}(C),\,(\bb_m)^t\cdot v\neq 0\right\}.
\end{equation*}
Note that $\mathcal{C}_{m+1}$ is a set of sets. Finally, define
\[
C_{\infty}=\bigcap_{m=0}^{\infty}\bigcup_{C\in\calc_m}C.
\]	
\textit{Claim 1: }
	$C_\infty\subseteq\Gamma_{\ell}$\\
Let $\gamma\in C_{\infty}$. For $m\geq 0$ and $0<j<j_{m+1}$ we have that
$$(\bb_m)^t\cdot \Delta[i_m,j]=\bzero^t \mbox{ and } (\bb_m)^t\cdot \pi_{i_m}(\gamma)\neq0.$$
Therefore, there are no solutions to \eqref{eq:gamma-1}, and hence $\gamma \in \Gamma_{\ell}$.\\ \\	
\textit{Claim 2: }
	$\dim \left(C_{\infty}\right)\geq 1 - \frac{1}{\ell}\frac{ \log {\frac {q}{q-1}}}{\log q}$ \\
If $j_{m+1}\neq \infty$ for all $m\geq0$, then $C_{\infty}$ is a $\left(\left(q^{i_{m}-i_{m-1}}\right)_{m=0}^\infty, \left(q^{i_{m}-i_{m-1}-1}\right)_{m=0}^{\infty}\right)$ Cantor set. Indeed, for every $m\geq0$, recall that $\rank(\Delta[i_{m-1},j_{m}])=i_{m-1}$, and hence, at least one of the last $i_{m} - i_{m-1}$ coefficients of $\bb_{m}$ is non zero. Therefore, for every $\gamma\in\R$ for which $\pi_{i_{m-1}}^{-1}(\pi_{i_{m-1}}(\gamma))\in\mathcal{C}_m$, there are exactly $q^{i_{m}-i_{m-1}-1}$ vectors $u\in\bbf_{q}^{i_{m}-i_{m-1}}$
for which
\begin{equation*}
\left(\bb_{m}\right)^{t}\cdot
\left(\begin{array}{c}
\pi_{i_{m-1}}\left(\gamma\right)\\
u
\end{array}\right)
=0.
\end{equation*}

Applying Lemma~\ref{lem:indConstruction}(\ref{eq:im}) $m-1$ times, yields $i_{m-1}\geq m\ell $. Since $\sum_{k=0}^{m-1} i_{k} - i_{k-1} = i_{m-1}$, it follows by Theorem~\ref{thm:CantorDim} that
\[
\dim \left(C_{\infty}\right) \geq 1 - \limsup_{m\to\infty}\frac{m+1}{i_{m-1}}\frac{ \log {\frac {q}{q-1}}}{\log q} \geq 1 - \frac {1}{\ell}\frac{ \log {\frac {q}{q-1}}}{\log q}.
\]
If there exists $m\geq0$ for which $j_{m+1}=\infty$, then $C_{\infty}$ is a non empty union of cylinders of length $i_m$, and therefore has a positive measure, thus Hausdorff dimension one.
\end{proof}


\subsection{The One Dimensional Case - Results}\label{sec:proofs}

This section is devoted for the statements and proofs of Theorems \ref{thm:TsengAnalog} and \ref{thm:CasAnalog} in the one dimensional case.
\begin{thm}\label{thm:TsengAnalog1}
For every $\theta\in\R$, $\dim\left(BA_{\theta}\right)=1$.
\end{thm}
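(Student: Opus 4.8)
The plan is to deduce Theorem~\ref{thm:TsengAnalog1} from Proposition~\ref{prop:mainProp} by relating the set $\Gamma_\ell$ to $BA_\theta$. The first step is to observe that $\Gamma_\ell \subseteq BA_\theta$, or rather that $\Gamma_\ell$ is contained in the set of $\gamma$ with $c(\theta,\gamma)\geq q^{-(1+\ell)}>0$. Indeed, fix $\gamma\in\Gamma_\ell$ and an arbitrary nonzero $N\in\Z$, say of degree $h$. By the matrix reformulation \eqref{eq:matrixCondition}, having $|N|\cdot|\langle N\theta-\gamma\rangle|<q^{-(1+\ell)}$ is equivalent to $\bn$ solving $\Delta[h+1+\ell,h+1]\cdot\bn = \pi_{h+1+\ell}(\gamma)$. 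The point of the indices construction is that for every $h$ there is an $m$ with $j_m \le h+1 < j_{m+1}$ (using Lemma~\ref{lem:indConstruction}(3), the $j_m$ are strictly increasing, and $j_0=0$); then $i_m \le j_m + \ell \le h+1+\ell$, so the system for degree $h$ restricts, on its first $i_m$ rows and first $j < j_{m+1}$ columns, to the system \eqref{eq:gamma-1} which by definition of $\Gamma_\ell$ has no solution. Hence no $\bn$ can solve the full system either, so $|N|\cdot|\langle N\theta-\gamma\rangle|\geq q^{-(1+\ell)}$ for all $N\neq 0$, giving $c(\theta,\gamma)\geq q^{-(1+\ell)}$. (If instead $j_{m+1}=\infty$ for some $m$, i.e.\ $\theta$ is rational, one argues directly; or one simply notes the relevant system for large $h$ still has no solution via the vector $\bb_m$ as in the proof of the Proposition.)

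The second step is the dimension bound. From $\Gamma_\ell \subseteq BA_\theta$ and Proposition~\ref{prop:mainProp} we get
\[
\dim(BA_\theta) \geq \dim(\Gamma_\ell) \geq 1 - \frac{1}{\ell}\,\frac{\log\frac{q}{q-1}}{\log q}
\]
for every integer $\ell>0$. Letting $\ell\to\infty$ forces $\dim(BA_\theta)\geq 1$. Since $BA_\theta\subseteq\R$ and $\dim\R=1$, we conclude $\dim(BA_\theta)=1$.

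The main obstacle, and the part requiring the most care, is the combinatorial bookkeeping in the first step: verifying that for every degree $h$ the large system \eqref{eq:matrixCondition} genuinely contains an unsolvable subsystem of the form \eqref{eq:gamma-1}. This hinges on the inequalities of Lemma~\ref{lem:indConstruction} — that consecutive $i_m$'s and $j_m$'s grow by at least $\ell$, and that $i_{m+1}\le j_{m+1}+\ell$ — to guarantee that the index $m$ chosen by $j_m \le h+1 < j_{m+1}$ satisfies $i_m \le h+1+\ell$, so that $\Delta[i_m, h+1]$ is an honest top-left submatrix of $\Delta[h+1+\ell, h+1]$ with a column count $h+1$ that may exceed $j_{m+1}-1$; one must check that the obstruction $\bb_m$ (which kills only columns $j<j_{m+1}$) still obstructs solvability once all $h+1$ columns are present, which is exactly why the definition of $\Gamma_\ell$ quantifies over $0<j<j_{m+1}$ rather than a single $j$, and why one uses that $\rank\Delta[i_m, j_{m+1}]=i_m$ makes the extra columns irrelevant to the first $i_m$ equations being inconsistent. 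Once this is set up cleanly the rest is immediate.
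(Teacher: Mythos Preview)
Your proof is correct and follows the paper's approach exactly: show $\Gamma_\ell \subseteq BA_\theta$ via the matrix reformulation \eqref{eq:matrixCondition} and Lemma~\ref{lem:indConstruction}, then apply Proposition~\ref{prop:mainProp} and let $\ell\to\infty$. Your final paragraph's worry is unnecessary: since you chose $m$ with $h+1 < j_{m+1}$, the column count $h+1$ never exceeds $j_{m+1}-1$, so $j=h+1$ lies directly in the range of \eqref{eq:gamma-1} and no extra argument about additional columns or $\rank\Delta[i_m,j_{m+1}]$ is needed.
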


\begin{proof}
Fix any $\ell>0$. Consider the sequences $\mathcal{I}_{\ell},\mathcal{J}_{\ell}$ of indices from Section \ref{subsec:IndicesCons}, and the set $\Gamma_{\ell}$ from Proposition~\ref{prop:mainProp}. Assume $\gamma\in\Gamma_{\ell}$. By Proposition~\ref{prop:mainProp}, for all $m \geq 0$ and $0<j<j_{m+1}$, there are no non zero solutions to \eqref{eq:gamma-1}. For any $h\in\bbn$, let $m$ be such that $j_{m}\leq h+1<j_{m+1}$. In particular,
\begin{equation}\label{eq:prev} 
\Delta\left[i_m,h+1\right]\cdot\mathbf{n}=\pi_{i_m}(\gamma),
\end{equation}
has no non zero solutions. Using Lemma~\ref{lem:indConstruction}(\ref{eq:imjm}), we get $i_m\leq j_{m}+\ell \leq h+1+\ell$. Therefore, the equation
\[
\Delta\left[h+1+\ell,h+1\right]\cdot\mathbf{n}=\pi_{h+1+\ell}(\gamma),
\]
has no non zero solutions, as it is obtained from \eqref{eq:prev} by increasing the number of equations.
By \eqref{eq:matrixCondition}, we get that
\[
\left|N\right|\cdot\left|\left\langle N\theta-\gamma\right\rangle \right| \geq q^{-(1+\ell)}
\]
for any $0\neq N\in\Z$. Therefore, $\Gamma_{\ell}\sub BA_{\theta}$. We apply Proposition~\ref{prop:mainProp} to bound the dimension of $BA_{\theta}$ from below:
\[
\dim BA_{\theta}\geq \dim \left(\Gamma_{\ell}\right)\geq 1 - \frac{1}{\ell}\frac{ \log {\frac {q}{q-1}}}{\log q}.
\]
Since the above holds for all $\ell>0$, and since $1-\frac{1}{\ell}\frac{ \log {\frac {q}{q-1}}}{\log q}\lto 1$, we conclude that $\dim\left(BA_{\theta}\right)=1$.
\end{proof}

\begin{prop}\label{prop:CasLowerBound1}	
For every $\theta\in\R$ one has that $\ct\geq q^{-2}$.
\end{prop}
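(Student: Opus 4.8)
The plan is to show that for every $\theta\in\R$ there is a $\gamma$ with $\ctg \geq q^{-2}$; in the language of the excerpt this means exhibiting $\gamma$ for which the right-hand side of \eqref{eq:matrixCondition} has no nonzero solution for every $h\geq 0$ when $\ell=1$. The natural move is to invoke Proposition~\ref{prop:mainProp} with $\ell=1$: the set $\Gamma_1$ is nonempty (indeed, by the dimension bound it is at least $1-\frac{\log(q/(q-1))}{\log q}$-dimensional, and in any case the Cantor construction in the proof of that proposition produces a nonempty limit set because at each stage at least one valid cylinder survives), so pick any $\gamma\in\Gamma_1$. Then, exactly as in the proof of Theorem~\ref{thm:TsengAnalog1}, for any $h\in\bbn$ choose $m$ with $j_m\leq h+1<j_{m+1}$; the system $\Delta[i_m,h+1]\cdot\bn=\pi_{i_m}(\gamma)$ has no nonzero solution by membership in $\Gamma_1$, and by Lemma~\ref{lem:indConstruction}(1) we have $i_m\leq j_m+1\leq h+2=h+1+\ell$, so augmenting to $\Delta[h+1+\ell,h+1]\cdot\bn=\pi_{h+1+\ell}(\gamma)$ still has no nonzero solution (adding equations only shrinks the solution set).

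The remaining step is purely bookkeeping with \eqref{eq:matrixCondition}: with $\ell=1$, the failure of $\Delta[h+2,h+1]\cdot\bn=\pi_{h+2}(\gamma)$ for every $N$ of degree exactly $h$ says $|N|\cdot|\langle N\theta-\gamma\rangle|\geq q^{-2}$ for all such $N$. Since every nonzero $N\in\Z$ has some degree $h\geq 0$, this holds for all $0\neq N$, hence $\ctg\geq q^{-2}$, and therefore $\ct=\sup_\gamma\ctg\geq q^{-2}$.

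I do not anticipate a genuine obstacle here, since all the real work has already been done in Proposition~\ref{prop:mainProp} and the proof of Theorem~\ref{thm:TsengAnalog1}; the one point worth stating carefully is why $\Gamma_1\neq\varnothing$ — this is because in the Cantor construction of Proposition~\ref{prop:mainProp} the number of cylinders removed at stage $m$ is $q^{i_m-i_{m-1}-1}<q^{i_m-i_{m-1}}$, so a nonempty collection $\calc_{m+1}$ always remains, and if some $j_{m+1}=\infty$ the limit set is a nonempty union of cylinders — so the proof is essentially a one-line corollary of the machinery already in place.

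\begin{proof}
Fix $\ell=1$ and consider the index sequences $\cali_1,\calj_1$ and the set $\Gamma_1$ of Proposition~\ref{prop:mainProp}. The Cantor construction in the proof of that proposition shows $\Gamma_1\neq\varnothing$: at each stage at most $q^{i_m-i_{m-1}-1}<q^{i_m-i_{m-1}}$ cylinders are discarded, so a nonempty $\calc_{m+1}$ always remains, and if $j_{m+1}=\infty$ for some $m$ then $C_\infty$ is a nonempty union of cylinders. Pick $\gamma\in\Gamma_1$. Given $h\in\bbn$, let $m$ be such that $j_m\leq h+1<j_{m+1}$. Then $\Delta[i_m,h+1]\cdot\bn=\pi_{i_m}(\gamma)$ has no nonzero solution, and by Lemma~\ref{lem:indConstruction}(1), $i_m\leq j_m+1\leq h+2$, so
\[
\Delta[h+2,h+1]\cdot\bn=\pi_{h+2}(\gamma)
\]
has no nonzero solution either, being obtained by adjoining equations. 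By \eqref{eq:matrixCondition} with $\ell=1$, $|N|\cdot|\langle N\theta-\gamma\rangle|\geq q^{-2}$ for every $N$ of degree $h$. As $h\geq 0$ was arbitrary, $\ctg\geq q^{-2}$, hence $\ct\geq q^{-2}$.
\end{proof}
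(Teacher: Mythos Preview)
Your proof is correct and follows essentially the same route as the paper: invoke Proposition~\ref{prop:mainProp} with $\ell=1$ to get $\Gamma_1\neq\varnothing$, then apply the argument of Theorem~\ref{thm:TsengAnalog1} verbatim to conclude $\ctg\geq q^{-2}$ for any $\gamma\in\Gamma_1$. The only minor difference is that the paper deduces $\Gamma_1\neq\varnothing$ from the dimension bound $\dim\Gamma_1>0$, whereas you argue non-emptiness directly from the Cantor construction; your version is slightly more careful, since for $q=2$ the dimension lower bound in Proposition~\ref{prop:mainProp} is only $\geq 0$, and one then needs exactly the counting argument you give (at each stage $q^{i_m-i_{m-1}-1}<q^{i_m-i_{m-1}}$ cylinders are removed from each parent, so at least one survives) to see that $C_\infty\neq\varnothing$.
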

\begin{proof}
The proof of Theorem~\ref{thm:TsengAnalog1} shows that any $\gamma\in\Gamma_1$ satisfies $\ctg\geq q^{-2}$. Proposition~\ref{prop:mainProp} implies $\dim\left(\Gamma_1\right) > 0$, hence in particular $\Gamma_1\neq\varnothing$. Therefore, $\ct\geq q^{-2}$.
\end{proof}

We now give a property of the elements $\theta\in\R$ for which $c(\theta)\geq q^{-1}$.
\begin{prop}\label{prop:CasUpperBound1}
If $c(\theta)\geq q^{-1}$ then there exists $m_0\in\bbn\cup\{\infty\}$ such that
\begin{equation}\label{eq:m0}
\Delta[m,m] \text{ is invertible exactly for }  0 < m < m_0.
\end{equation}
\end{prop}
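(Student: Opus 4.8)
The plan is to extract from $c(\theta)\ge q^{-1}$ a single inhomogeneous part $\gamma$ realizing this bound, to translate the resulting non-approximability into a rank condition on the square submatrices $\Delta[m,m]$, and then to show by a short linear-algebra argument that invertibility of $\Delta[m,m]$ propagates down to $\Delta[m-1,m-1]$.

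\emph{Step 1 (finding $\gamma$).} I would first note that for a fixed $\gamma$ and $0\ne N\in\Z$ the quantity $|N|\cdot|\langle N\theta-\gamma\rangle|$ is either $0$ or an integer power of $q$; hence $c(\theta,\gamma)\in\{0\}\cup\{q^k:k\in\bbz\}$, and when it is positive it is attained (a subset of $\{q^k:k\in\bbz\}$ bounded below has a least element). Consequently $c(\theta)=\sup_{\gamma}c(\theta,\gamma)\ge q^{-1}$ forces some $\gamma$ with $c(\theta,\gamma)>q^{-2}$, and since there is no integer power of $q$ strictly between $q^{-2}$ and $q^{-1}$, this $\gamma$ in fact satisfies $c(\theta,\gamma)\ge q^{-1}$; fix it.

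\emph{Step 2 (matrix reformulation).} Next I would show that $c(\theta,\gamma)\ge q^{-1}$ implies: for every $m\ge1$ the system $\Delta[m,m]\cdot\bn=\pi_m(\gamma)$ has no nonzero solution $\bn\in\bbf_q^{m}$. Given such a solution, let $h\le m-1$ be the largest index with $n_h\ne0$ and set $N=n_ht^h+\cdots+n_0$, a polynomial of degree $h$; reading off the first $m$ rows gives $L_i(\theta)=\gamma_i$ for $1\le i\le m$, so by \eqref{eq:matrixCondition} (with $\ell=m-1-h\ge0$) we obtain $|N|\cdot|\langle N\theta-\gamma\rangle|<q^{-(m-h)}\le q^{-1}$, contradicting the choice of $\gamma$.

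\emph{Step 3 (propagation).} Finally I would argue that for $m\ge2$, if $\Delta[m,m]$ is invertible then so is $\Delta[m-1,m-1]$. Indeed, invertibility of $\Delta[m,m]$ makes $\Delta[m,m]\cdot\bn=\pi_m(\gamma)$ uniquely solvable, and by Step 2 its solution cannot be nonzero, so it is $\bzero$; hence $\pi_m(\gamma)=\bzero$, and in particular $\pi_{m-1}(\gamma)=\bzero$. If $\Delta[m-1,m-1]$ were singular, a nonzero kernel vector $\bn'$ would give $\Delta[m-1,m-1]\cdot\bn'=\bzero=\pi_{m-1}(\gamma)$, a nonzero solution of the size-$(m-1)$ system, again contradicting Step 2. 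Thus $\{m\ge1:\Delta[m,m]\text{ is invertible}\}$ is an initial segment $\{m:0<m<m_0\}$ of the positive integers for a suitable $m_0\in\bbn\cup\{\infty\}$ (take $m_0=1$ if no $\Delta[m,m]$ with $m\ge1$ is invertible, and $m_0=\infty$ if all are), which is exactly \eqref{eq:m0}.

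I expect no serious obstacle; the only points requiring a little care are the discreteness argument in Step 1 (making sure the supremum $c(\theta)\ge q^{-1}$ is genuinely \emph{attained} by some $\gamma$, not merely approached) and, in Step 2, the bookkeeping that turns a solution/kernel vector of the square Hankel matrix $\Delta[m,m]$ into a bona fide polynomial $N$ whose degree $h$ is read off from its top nonzero coordinate.
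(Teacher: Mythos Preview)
Your proof is correct and follows essentially the same route as the paper's: extract a $\gamma$ with $c(\theta,\gamma)\ge q^{-1}$, translate this via \eqref{eq:matrixCondition} into the absence of nonzero solutions of $\Delta[m,m]\bn=\pi_m(\gamma)$ for every $m$, and then combine invertibility of $\Delta[m_2,m_2]$ (forcing $\pi_{m_2}(\gamma)=\bzero$) with singularity of $\Delta[m_1,m_1]$ for some $m_1<m_2$ to reach a contradiction. The only cosmetic differences are that you phrase Step~3 as downward propagation of invertibility rather than as a direct contradiction from a bad pair $m_1<m_2$, and that you spell out the discreteness argument in Step~1 (which the paper takes for granted).
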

\begin{proof}
Assume that there is no $m_0$ satisfying \eqref{eq:m0}. Therefore, there are $0< m_1 < m_2 <\infty$ such that $\Delta[m_1,m_1]$ is not invertible and $\Delta[m_2,m_2]$ is invertible. By assumption, there exists $\gamma$ such that
\begin{equation}\label{eq:solutions}
\left|N\right|\cdot\left|\left\langle N\theta-\gamma\right\rangle \right| < q^{-1}
\end{equation}
has no solutions $0\neq N$ in $\Z$. By \eqref{eq:matrixCondition} with $\ell=0$,
\begin{equation}\label{eq:basicEquation}
\Delta\left[m,m\right]\cdot \bn=\pi_m\left(\gamma\right)
\end{equation}
has no solutions $0\neq\bn\in \bbf_q^m$ for any $m>0$ with $n_m\neq0$. Therefore, there are no non zero solutions to \eqref{eq:basicEquation}. In particular, $\Delta[m_2,m_2]\cdot\bn=\pi_{m_2}(\gamma)$ has no non zero solutions $\bn\in\bbf_q^{m_2}$. However, $\Delta[m_2,m_2]$ is invertible, so we must have $\pi_{m_2}(\gamma)=\bzero$. Since $m_1 < m_2$, we have $\pi_{m_1}(\gamma)=\bzero$. Now, $\Delta[m_1,m_1]$ is non-invertible, therefore, the equation $\Delta[m_1,m_1]\cdot\pi_{m_1}\left(\bn\right)=\bzero$ has non zero solutions, contradicting \eqref{eq:basicEquation} for $m_1$.
\end{proof}

\begin{remark}
In the extreme cases $m_0=1$ and $m_0=\infty$, the other implication also holds. To see this, note that if $\det\left(\Delta[m,m]\right)=0$ for all $m>0$ then we must have $\theta=0$. Indeed, for any $m>0$, assume that $\theta_1=\ldots=\theta_{m-1}=0$. Then $\Delta[m,m]$ have $\theta_{m}$ on the anti diagonal, and zeroes above it. Therefore, $0=\det\left(\Delta[m,m]\right)=\left(\theta_{m}\right)^m$, so $\theta_m=0$. For $\theta=0$, any $\gamma\neq0$ does not have solutions for \eqref{eq:basicEquation}. If $m_0=\infty$, choose $\gamma=0$. Since $\Delta[m,m]$ is invertible for every $m$, the only solution to \eqref{eq:basicEquation} is $\bn=\bzero$.
\end{remark}
As a corollary of Propositions \ref{prop:CasLowerBound1} and \ref{prop:CasUpperBound1}, we get:
\begin{thm}\label{thm:CasAnalog1}	
$c = q^{-2}$.
\end{thm}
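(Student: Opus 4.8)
The plan is to deduce Theorem~\ref{thm:CasAnalog1} from the two preceding propositions together with one more easy computation bounding $c(\theta)$ from above for the $\theta$'s that survive Proposition~\ref{prop:CasUpperBound1}. First, Proposition~\ref{prop:CasLowerBound1} already gives $c(\theta)\geq q^{-2}$ for \emph{every} $\theta\in\R$, hence $c=\inf_\theta c(\theta)\geq q^{-2}$. So the whole content of the theorem is the reverse inequality $c\leq q^{-2}$, i.e. exhibiting (for each $\eps>0$) some $\theta$ with $c(\theta)< q^{-2}+\eps$; in fact it suffices to produce a single $\theta$ with $c(\theta)\leq q^{-2}$, equivalently $c(\theta)<q^{-1}$.

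Here is where Proposition~\ref{prop:CasUpperBound1} enters, contrapositively: if $c(\theta)\geq q^{-1}$ then the invertibility pattern of the Hankel-type matrices $\Delta[m,m]$ is an initial segment, namely there is $m_0\in\bbn\cup\{\infty\}$ with $\Delta[m,m]$ invertible exactly for $0<m<m_0$. By the Remark following that proposition, in the two extreme cases $m_0=1$ (which forces $\theta=0$) and $m_0=\infty$ one can already name a $\gamma$ with no nonzero solutions to \eqref{eq:basicEquation} for all $m$, so that $c(\theta)\geq q^{-1}$ genuinely holds there; thus those $\theta$'s are not counterexamples and we cannot use them. Instead I would take any $\theta$ whose pattern is \emph{not} an initial segment — for instance, choose $\theta$ so that $\Delta[1,1]$ is not invertible (i.e. $\theta_1=0$) but $\Delta[2,2]$ is invertible (i.e. $\theta_1\theta_3-\theta_2^2\neq0$, which for $\theta_1=0$ means $\theta_2\neq0$); concretely $\theta=t^{-2}$ works, or more safely some generic $\theta$ with $\theta_1=0,\theta_2\neq0$. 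For such a $\theta$, Proposition~\ref{prop:CasUpperBound1} gives the contrapositive conclusion $c(\theta)<q^{-1}$, and since the approximation constant only takes values in $\{0\}\cup q^{\bbz_{\leq 0}}$ (because $|N|\cdot|\langle N\theta-\gamma\rangle|$ is a power of $q$ for each $N$, and $c(\theta,\gamma)$ is an infimum of such, and then a supremum over $\gamma$), $c(\theta)<q^{-1}$ forces $c(\theta)\leq q^{-2}$. Combining, $c=\inf_\theta c(\theta)\leq q^{-2}$.

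Putting the two directions together yields $c=q^{-2}$. I would write the argument as: Proposition~\ref{prop:CasLowerBound1} gives $c\geq q^{-2}$; for the upper bound pick $\theta$ with $\theta_1=0\neq\theta_2$, note $\Delta[1,1]=(\theta_1)=(0)$ is singular while $\Delta[2,2]=\bigl(\begin{smallmatrix}0&\theta_2\\\theta_2&\theta_3\end{smallmatrix}\bigr)$ has determinant $-\theta_2^2\neq0$, so the invertibility pattern of $\{\Delta[m,m]\}$ is not of the form in \eqref{eq:m0}; by Proposition~\ref{prop:CasUpperBound1} this forces $c(\theta)<q^{-1}$, hence $c(\theta)\leq q^{-2}$ by discreteness of the value set, hence $c\leq q^{-2}$.

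The only genuinely delicate point — and the step I would double-check most carefully — is the discreteness/value-set remark used to pass from the strict inequality $c(\theta)<q^{-1}$ to $c(\theta)\leq q^{-2}$: one must confirm that $c(\theta)=\sup_\gamma c(\theta,\gamma)$ indeed lands in $\{0\}\cup\{q^{-k}:k\in\bbn\}$. This is immediate from \eqref{eq:matrixCondition}: for fixed $\gamma$, the set of $\ell\geq0$ such that some $0\neq N$ of some degree $h$ achieves $|N|\cdot|\langle N\theta-\gamma\rangle|<q^{-(1+\ell)}$ is downward closed, so $c(\theta,\gamma)$ is either $0$ or $q^{-(1+\ell)}$ for the largest failing $\ell$, and the supremum over $\gamma$ of such values is again of this form (or possibly the limit $q^{-1}\cdot$something, but it cannot lie strictly between $q^{-2}$ and $q^{-1}$). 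I would phrase this as a one-line observation rather than belabor it, citing \eqref{eq:matrixCondition}. Everything else is bookkeeping with the already-proved Propositions~\ref{prop:CasLowerBound1} and~\ref{prop:CasUpperBound1}.
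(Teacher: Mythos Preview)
Your proposal is correct and follows essentially the same route as the paper: use Proposition~\ref{prop:CasLowerBound1} for the lower bound, then exhibit a $\theta$ with $\theta_1=0$, $\theta_2\neq0$ that violates the conclusion of Proposition~\ref{prop:CasUpperBound1}, forcing $c(\theta)<q^{-1}$ and hence $c(\theta)=q^{-2}$. The only difference is that you make the discreteness step (passing from $c(\theta)<q^{-1}$ to $c(\theta)\le q^{-2}$) explicit, whereas the paper leaves it implicit; your justification via the value set $\{0\}\cup\{q^k:k\in\bbz\}$ is fine and could be stated as the one-liner you suggest.
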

\begin{proof}
One only needs to make sure that there exists $\theta$ such that $c(\theta)=q^{-2}$. It is enough to find $\theta$ which does not satisfy the conclusion of Proposition~\ref{prop:CasUpperBound1}. Any $\theta$ with $\theta_1=0$ and $\theta_2\neq0$ works.
\end{proof}

We complete the discussion on the one dimensional case by showing that replacing the $\inf$ by $\liminf$ in the definition of $\ctg$ does not change the value of the constant:

\begin{prop}\label{prop:CasUpperBoundLiminf}
Let $\theta\in\R$. If $\tilde{c}\left(\theta\right)\myeq\sup_{\gamma}\liminf \left\{|N||\langle N\theta-\gamma\rangle|\sep 0\neq N\in \Z\right\} \geq q^{-1}$ then there exists $m_0\in\bbn\cup\{\infty\}$ such that
$\Delta[m,m]$ is either invertible for all  $m \geq m_0$ or non invertible for all $m \geq m_0$.
\end{prop}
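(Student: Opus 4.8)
The plan is to mimic the proof of Proposition~\ref{prop:CasUpperBound1}, but accounting for the fact that the $\liminf$ condition only controls approximations $N$ of large degree, so only the "tail" of the sequence $\bigl(\Delta[m,m]\bigr)_{m}$ is constrained. Suppose, for contradiction, that no such $m_0$ exists; this means that for every $m_0\in\bbn$ the sequence $\bigl(\det\Delta[m,m]\bigr)_{m\geq m_0}$ both vanishes and fails to vanish infinitely often. Concretely, there are arbitrarily large indices $m_1<m_2$ with $\Delta[m_1,m_1]$ non-invertible and $\Delta[m_2,m_2]$ invertible. By the hypothesis $\tilde c(\theta)\geq q^{-1}$, for each $\eps>0$ there is a $\gamma\in\R$ with $\liminf\{|N|\cdot|\langle N\theta-\gamma\rangle|\}\geq q^{-1}-\eps$, equivalently (taking $\eps$ small enough that $q^{-1}-\eps>q^{-2}$, e.g.\ $\eps$ with $q^{-1}-\eps$ in $(q^{-2},q^{-1}]$) there is a $\gamma$ and an $h_0$ such that $|N|\cdot|\langle N\theta-\gamma\rangle|\geq q^{-1}$, equivalently $<q^{-1}$ has no solution, for all $N$ with $\deg N\geq h_0$. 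Via \eqref{eq:matrixCondition} with $\ell=0$ this says $\Delta[m,m]\cdot\bn=\pi_m(\gamma)$ has no solution $\bn$ with $n_m\neq0$ for $m\geq h_0$, hence (as in the earlier proof, running down from any large solution) no non-zero solution $\bn\in\bbf_q^m$ for $m\geq h_0$.

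Next I would run the exact argument of Proposition~\ref{prop:CasUpperBound1} inside the tail. Pick the witnessing indices $m_1<m_2$ from the failure of $m_0$ with $m_1\geq h_0$ (possible since there are arbitrarily large such pairs, by taking the $\gamma$ corresponding to our fixed small $\eps$ and then choosing $m_1,m_2\geq h_0$). Then $\Delta[m_2,m_2]\cdot\bn=\pi_{m_2}(\gamma)$ has no non-zero solution but $\Delta[m_2,m_2]$ is invertible, forcing $\pi_{m_2}(\gamma)=\bzero$, hence $\pi_{m_1}(\gamma)=\bzero$ as $m_1<m_2$; but $\Delta[m_1,m_1]$ is non-invertible, so $\Delta[m_1,m_1]\cdot\bn=\bzero$ has a non-zero solution, contradicting the "no non-zero solution for $m\geq h_0$" conclusion. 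This gives the claim.

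The one genuinely new point compared to the $\inf$ case is the quantifier juggling at the start: the $\liminf$ only yields, for each tolerance $\eps$, a $\gamma$ and a threshold $h_0=h_0(\gamma,\eps)$ beyond which approximations are bad, so the whole argument must be localized to degrees $\geq h_0$, and one must be sure that the bad pair $(m_1,m_2)$ can be found with $m_1\geq h_0$ — which is exactly what the negation of the conclusion provides, since that negation asserts such pairs exist with $m_1$ arbitrarily large. I expect this bookkeeping — fixing $\eps$ so that $q^{-1}-\eps>q^{-2}$, extracting $(\gamma,h_0)$, and then selecting $m_1,m_2\geq h_0$ — to be the main (and only) obstacle; everything after that is verbatim the linear-algebra argument already carried out in Proposition~\ref{prop:CasUpperBound1}. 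One should also note the degenerate cases (e.g.\ $\theta$ rational, where the coefficient sequence is eventually periodic and the $\Delta[m,m]$ pattern stabilizes automatically) are consistent with the stated conclusion, so they need no separate treatment.
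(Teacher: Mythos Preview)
Your approach matches the paper's (argue by contradiction from infinitely many alternating pairs $m_1<m_2$, invoking the linear algebra of Proposition~\ref{prop:CasUpperBound1}), but the ``running down'' step has a genuine gap. In Proposition~\ref{prop:CasUpperBound1} one knows there is no solution with $n_m\neq 0$ for \emph{every} $m\geq 1$, so truncating a hypothetical nonzero solution to its leading nonzero position $j$ always lands in the forbidden range and yields a contradiction. Here the forbidden range is only $m\geq h_0+1$, so a nonzero solution whose leading nonzero entry sits at some $j\leq h_0$ survives the descent. Concretely, the unique solution of $\Delta[m_2,m_2]\bn=\pi_{m_2}(\gamma)$ may be a nonzero vector supported in the first $h_0$ coordinates --- this occurs whenever some $N$ with $\deg N<h_0$ satisfies $|\langle N\theta-\gamma\rangle|<q^{-m_2}$ --- and then $\pi_{m_2}(\gamma)\neq\bzero$, so your deduction stalls. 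The same obstruction recurs at the $m_1$ step, since a kernel vector of $\Delta[m_1,m_1]$ may also have low degree.

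The repair genuinely requires the \emph{infinitude} of pairs rather than a single one. If for every large invertible index $m_2$ the unique solution is supported in the first $h_0$ coordinates, then by pigeonhole some fixed $N^*$ (possibly $0$) recurs for arbitrarily large $m_2$, forcing $N^*\theta\equiv\gamma\pmod{\Z}$; the inhomogeneous $\liminf$ then coincides with the homogeneous one, and the hypothesis on $\theta$ (which, via the identity ``$\Delta[m,m]$ invertible $\iff m=\deg q_n$ for some convergent denominator $q_n$'', forces infinitely many partial quotients of degree $\geq 2$) gives $\liminf_{N}|N|\cdot|\langle N\theta\rangle|\leq q^{-2}<q^{-1}-\eps$, contradicting the choice of $\gamma$. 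The paper's own proof is equally terse here --- it simply asserts that \eqref{eq:solutions} has infinitely many nonzero solutions --- but it is phrased so as to avoid the incorrect intermediate claim ``no nonzero solution for $m\geq h_0$''.
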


\begin{proof}
The proof here is similar to the proof of Theorem~\ref{prop:CasUpperBound1}. Assume that there are infinitely many pairs $0 < m_1 < m_2$ for which $\Delta\left[m_1,m_1\right]$ is non invertible and $\Delta\left[m_2,m_2\right]$ is invertible. This implies that \eqref{eq:solutions} has infinitely many non zero solutions, hence, $\tilde{c}\left(\theta\right) < q^{-1}$, which contradicts the assumptions of the proposition.
\end{proof}

\begin{thm}\label{thm:liminf}
$\tilde{c}\myeq\inf_{\theta}\tilde{c}\left(\theta\right)=q^{-2}.$
\end{thm}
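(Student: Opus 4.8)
The plan is to deduce Theorem~\ref{thm:liminf} from the results already established for $c$ together with Proposition~\ref{prop:CasUpperBoundLiminf}. First observe that the inequality $\tilde c \geq q^{-2}$ is immediate: by definition $\liminf_{N}|N||\langle N\theta-\gamma\rangle| \geq \inf_{0\neq N}|N||\langle N\theta-\gamma\rangle| = \ctg$, so taking $\sup_\gamma$ gives $\tilde c(\theta)\geq \ct$, and taking $\inf_\theta$ gives $\tilde c\geq c = q^{-2}$ by Theorem~\ref{thm:CasAnalog1}. So the whole content is the reverse inequality $\tilde c\leq q^{-2}$, i.e.\ exhibiting a single $\theta$ with $\tilde c(\theta)\leq q^{-2}$, equivalently $\tilde c(\theta) < q^{-1}$ (since the possible values of the constant are powers of $q$, or more directly since $q^{-1}$ is the next value above $q^{-2}$).

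The key step is to apply Proposition~\ref{prop:CasUpperBoundLiminf} contrapositively: if $\theta$ is chosen so that there are infinitely many pairs $0<m_1<m_2$ with $\Delta[m_1,m_1]$ non-invertible and $\Delta[m_2,m_2]$ invertible, then $\tilde c(\theta) < q^{-1}$, hence $\tilde c(\theta)\leq q^{-2}$, which forces $\tilde c \leq q^{-2}$ and completes the proof. So I would spend the remainder of the argument constructing such a $\theta$ explicitly. A clean choice is a $\theta$ whose coefficient pattern alternates in blocks so that the determinants $\det\Delta[m,m]$ vanish and then become nonzero again infinitely often. Using the recursion recalled in Remark~\ref{rem:Ainfty}, namely $\det\Delta[m+1,m+1] = \det\Delta[m,m]\,\theta_{2m+1} + F(\theta_1,\dots,\theta_{2m})$, one can prescribe the $\theta_i$ inductively: whenever $\det\Delta[m,m]=0$ and one wants $\det\Delta[m+1,m+1]\neq0$, the identity degenerates and one instead checks directly (as in the remark following Proposition~\ref{prop:CasUpperBound1}, the $\theta=0$ computation shows the anti-diagonal mechanism) that a suitable nonzero choice of a later coefficient makes the next determinant nonzero; and when one wants the determinant to vanish, one simply sets $\theta_{2m+1}$ to the forced value $-F/\det\Delta[m,m]$ when $\det\Delta[m,m]\neq0$. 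Even simpler: take $\theta$ with $\theta_1=0,\ \theta_2\neq0$ as in the proof of Theorem~\ref{thm:CasAnalog1}, which already gives $\Delta[1,1]$ non-invertible and $\Delta[2,2]$ invertible — then periodically ``reset'' the construction to reproduce this configuration infinitely often, e.g.\ by interleaving long stretches governed by the above recursion that return the running determinant to $0$ and then away from $0$.

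The main obstacle is making the ``infinitely many good pairs'' construction genuinely rigorous without a tedious determinantal bookkeeping: one must be sure that the block-wise prescription of coefficients really does produce infinitely many indices $m_1$ with $\det\Delta[m_1,m_1]=0$ and infinitely many larger $m_2$ with $\det\Delta[m_2,m_2]\neq0$, rather than, say, all determinants eventually vanishing (which by the remark forces $\theta=0$) or eventually never vanishing. The cleanest way around this is to avoid tracking full $m\times m$ determinants and instead argue at the level of ranks as in Section~\ref{subsec:IndicesCons}: it suffices to force the rank profile $m\mapsto \rank\Delta[m,m]$ to be strictly below $m$ at infinitely many scales and equal to $m$ at infinitely many scales, and the freedom to choose each new coefficient $\theta_i$ arbitrarily (it appears linearly and with a computable leading coefficient in each newly-completed minor) gives exactly this freedom. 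Once a concrete such $\theta$ is written down, Proposition~\ref{prop:CasUpperBoundLiminf} does the rest, and combined with the trivial lower bound we conclude $\tilde c = q^{-2}$.
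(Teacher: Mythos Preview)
Your overall strategy is exactly the paper's: the lower bound $\tilde c\geq c=q^{-2}$ is immediate, and the upper bound comes from applying Proposition~\ref{prop:CasUpperBoundLiminf} contrapositively to a single $\theta$ for which $\det\Delta[m,m]$ alternates between zero and nonzero infinitely often.

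The gap in your write-up is that you never actually produce such a $\theta$. Your three suggested mechanisms (the determinant recursion from Remark~\ref{rem:Ainfty}, ``periodically resetting'' the $\theta_1=0,\theta_2\neq0$ pattern, and a rank-profile argument) are all left as sketches, and you yourself flag the bookkeeping as ``the main obstacle''. In particular, the recursion is unhelpful precisely at the step you need most: once $\det\Delta[m,m]=0$ it gives no control over $\det\Delta[m+1,m+1]$, so you cannot use it to force the determinant back to a nonzero value. The paper avoids all of this with a one-line explicit construction: set $\theta_m=1$ exactly when $m=m_k\myeq 2^{k+1}-2$ for some $k\geq1$ (so $m=2,6,14,30,\ldots$) and $\theta_m=0$ otherwise. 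Then $\Delta[m_k,m_k]$ has $1$'s along its anti-diagonal and only zeros below it, hence is invertible; while $\Delta[m_k+1,m_k+1]$ has its last row and last column identically zero (because $\theta_{m_k+1}=\cdots=\theta_{2m_k+1}=0$, the next nonzero coefficient being $\theta_{m_{k+1}}$ with $m_{k+1}=2m_k+2$), hence is singular. This supplies infinitely many pairs $(m_1,m_2)=(m_k+1,m_{k+1})$ with $\Delta[m_1,m_1]$ singular and $\Delta[m_2,m_2]$ invertible, and Proposition~\ref{prop:CasUpperBoundLiminf} finishes the proof immediately.
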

\begin{proof}
By definition we have $c\leq\tilde{c}$, so it is enough to find $\theta$ for which $\tilde{c}(\theta)\leq q^{-2}$. Define $\theta$ by $\theta_{m_k}=1$ for the sequence $m_k=2^{k+1}-2$, $k=1,2,\ldots $, and $\theta_m=0 $ for every other $m\in\bbn\setminus\left\{m_k\sep k\in\bbn\right\}$. For this $\theta$ we have that $\Delta[m_k,m_k]$ is invertible because the anti diagonal is full with ones, and below the anti diagonal there are only zeros. On the other hand, $\Delta[m_k+1,m_k+1]$ is non invertible since the last row and column are zero. Hence, by Proposition \ref{prop:CasUpperBoundLiminf}, $\tilde{c}(\theta)\leq q^{-2}$.
\end{proof}

\section{The General Case}\label{sec:generalCase}

We now turn to prove Theorems \ref{thm:TsengAnalog} and \ref{thm:CasAnalog}. To this end, we need to further
generalize our indices construction. Fix a generalized weight $\bg$, a vector $\btheta\in\R^d$ and $\ell>0$, and define the matrices
\begin{equation}\label{eq:matrixHigherDimension}
\Delta\left[i,j\right]=\left(\begin{array}{ccc}
\theta_{1}^{1} & \cdots & \theta_{j}^{1}\\
\vdots &  & \vdots\\
\theta_{g^{1}\left(i\right)}^{1} & \cdots & \theta_{g^1\left(i\right)+j}^{1}\\
\cdots & \cdots & \cdots\\
\theta_{1}^{d} & \cdots & \theta_{j}^{d}\\
\vdots &  & \vdots\\
\theta_{g^d\left(i\right)}^{d} & \cdots & \theta_{g^d\left(i\right)+j}^{d}
\end{array}\right).
\end{equation}
We construct the set of indices $\cali_{\bg,\ell},\;\calj_{\bg,\ell}$ the same way as in the one dimensional case. This construction has the same properties as summarized in Lemma~\ref{lem:indConstruction}, as one can prove by repeating the proof of Lemma~\ref{lem:indConstruction} verbatim. The same argument works since the rank is independent of the order of the rows.

Define the set
\[
BA_{\btheta}\left(\bg,\ell\right)  =  \left\{ \bgamma\in\R^d\sep\inf_{N\neq0}\max_{1\leq s\leq d}\left|N\right|^{\frac{g^s\left(\deg N+1+\ell\right)}{\deg N}}\left|\left\langle N\theta^{s}-\gamma^{s}\right\rangle \right|\geq1\right\}.
\]
For every $1\leq s\leq d$, we have $g^s(n+1)\leq g^s(n)+1$ for all $n$. It follows that
\begin{equation}\label{eq:BAh}
BA_{\btheta}\left(\bg,\ell\right)\sub BA_{\btheta}\left(\bg\right).
\end{equation}
Note that for any polynomial $N$ of $\deg N=h$, and every $\ell>0$, one has:
\begin{equation}
\begin{array}{ll}
\max_{1\leq s\leq d} \left|N\right|^{\frac{g^s\left(h+1+\ell\right)}{h}}\left|\left\langle N\theta^{s}-\gamma^{s}\right\rangle \right|  < 1   & \iff \\
&\\
\Delta\left[h+1+\ell,h+1\right]\bn   =  \pi_{\bg}\left(\bgamma\right),
\end{array}
\end{equation}

where $\bn$ is the coefficients vector of the polynomial $N$. This is the higher dimensional version of \eqref{eq:matrixCondition}. The next proposition is the higher dimensional version of Proposition \ref{prop:mainProp}, and the idea of the proof is the same. Therefore, we will mainly emphasize the differences in the proof.

\begin{prop}\label{prop:mainProp2}
Assume $\btheta\in\R^{d}$, a generalized weight $\bg$, and $\ell>0$. Define $\Gamma_{\btheta}\left(\bg,\ell\right)$ as the set
of all $\bgamma\in\R^{d}$ such that for any $m\in\bbn$ and $0<j<j_{m+1}$,
the equation
\begin{equation}
\Delta\left[i_{m},j\right]\bn=\pi_{\bg\left(i_{m}\right)}\left(\bgamma\right)
\end{equation}
has no solutions $\bn\in\bbf_{q}^{j}$. Then $\Gamma_{\btheta}\left(\bg,\ell\right)\neq\varnothing$. Moreover, if
\begin{equation}\label{eq:moreover2}
\min \bg\left(i_m\right) \mto \infty
\end{equation}
then
\begin{equation}\label{eq:dimGamma}
\dim \left(C_{\infty}\right) \geq d - \limsup_{m\to\infty}\frac{m+1}{\min{\bg(i_{m-1})}}\frac{ \log {\frac {q}{q-1}}}{\log q}.
\end{equation}
\end{prop}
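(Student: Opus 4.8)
The plan is to mimic the proof of Proposition~\ref{prop:mainProp} almost verbatim, building a Cantor construction whose limit set $C_\infty$ lies inside $\Gamma_{\btheta}(\bg,\ell)$, and then invoking Theorem~\ref{thm:CantorDim}. First I would set $\calc_0=\{I^d\}$ and, inductively, given $\calc_m$, use the defining property of $j_{m+1}$ in the index construction $\cali_{\bg,\ell},\calj_{\bg,\ell}$: for every $j<j_{m+1}$ one has $\rank(\Delta[i_m,j])\leq i_m-1$, hence the left kernel is nontrivial, and by the nesting of left kernels as $j$ grows (exactly as in the one-dimensional case, since $\Delta[i_m,j]$ is a submatrix of $\Delta[i_m,j']$ when $j\leq j'$) there is a single $\bzero\neq\bb_m\in\bbf_q^{\min\bg(i_m)}$ with $(\bb_m)^t\cdot\Delta[i_m,j]=\bzero^t$ for all $0<j<j_{m+1}$. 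Here the ambient dimension of $\bb_m$ is the number of rows of $\Delta[i_m,\cdot]$, namely $\overline{\bg(i_m)}=i_m$, so this is literally the same object as before; the only bookkeeping change is that the rows are now indexed by the blocks $s=1,\dots,d$ according to the matrix \eqref{eq:matrixHigherDimension}.

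Then I would define
\[
\calc_{m+1}=\bigcup_{C\in\calc_m}\left\{\pi_{\bg(i_m)}^{-1}(\bv)\;:\;\bv\in\pi_{\bg(i_m)}(C),\ (\bb_m)^t\cdot\bv\neq0\right\},
\]
and set $C_\infty=\bigcap_m\bigcup_{C\in\calc_m}C$. The inclusion $C_\infty\subseteq\Gamma_{\btheta}(\bg,\ell)$ (Claim~1) is immediate: for $\bgamma\in C_\infty$ and $0<j<j_{m+1}$ we have $(\bb_m)^t\cdot\Delta[i_m,j]=\bzero^t$ while $(\bb_m)^t\cdot\pi_{\bg(i_m)}(\bgamma)\neq0$, so the matrix equation has no solution. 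Since each step cuts out a nonempty sub-collection of cylinders, $C_\infty\neq\varnothing$, giving the first assertion. For the dimension bound, I would argue as in Claim~2: when $j_{m+1}<\infty$ for all $m$, the passage from level $m$ to $m+1$ refines each cylinder of ``length'' $\bg(i_{m-1})$ into cylinders of length $\bg(i_m)$, and because $\rank(\Delta[i_{m-1},j_m])=i_{m-1}$ at least one of the newly exposed coordinates of $\bb_m$ (among those in positions corresponding to the increment $\bg(i_m)-\bg(i_{m-1})$) is nonzero in each of the relevant coordinate blocks that actually grew — so in each growing block exactly a $q^{-1}$-fraction of extensions is discarded, exhibiting $C_\infty$ as a Cantor set in the sense of Section~\ref{subsec:CantorConstructions} with the sequences $q^{\bg(i_m)-\bg(i_{m-1})}$ and the appropriate ``$-1$ per discarded block''. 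Applying Theorem~\ref{thm:CantorDim} with $\min\sum_{k=0}^{m-1}(\bg(i_k)-\bg(i_{k-1}))=\min\bg(i_{m-1})$ and using hypothesis \eqref{eq:moreover2} for the strong-tree-like condition yields \eqref{eq:dimGamma}. If $j_{m+1}=\infty$ for some $m$, then $C_\infty$ is a nonempty finite union of cylinders, hence has positive $\mu^d$-measure and full dimension $d$.

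The one genuinely delicate point — and the step I expect to be the main obstacle — is making the Cantor-set bookkeeping precise when the weight functions $g^s$ are not strictly increasing, so that passing from $i_{m-1}$ to $i_m$ may enlarge only some of the $d$ blocks of coordinates (those $s$ with $g^s(i_m)>g^s(i_{m-1})$) while leaving others fixed. One must check that the condition $(\bb_m)^t\cdot\bv\neq0$ still removes exactly a $1/q$-fraction of the extensions of each previously admissible cylinder; this needs the observation that $\bb_m$ has a nonzero entry in at least one of the coordinate positions that is genuinely new at step $m$, which follows from $\rank(\Delta[i_{m-1},j_m])=i_{m-1}$ (so $\bb_{m}$ cannot be supported on the old rows, since those would already kill $\Delta[i_{m-1},j_m]$ and force $\bb_m$ into the trivial left kernel there). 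Once this is in place, the $D_m\geq (q-1)/q$ estimate of Theorem~\ref{thm:CantorDim} goes through unchanged, and the rest is the same computation as in Proposition~\ref{prop:mainProp}. I would also remark explicitly that Lemma~\ref{lem:indConstruction} holds verbatim in this setting (the proof only uses monotonicity of ranks under adding rows/columns, which is insensitive to row order), so the inequalities $i_{m+1}\geq i_m+\ell$ etc.\ remain available if needed for comparison purposes.
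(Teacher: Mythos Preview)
Your approach is the same as the paper's: build $\calc_m$ via a left-kernel vector $\bb_m\in\bbf_q^{i_m}$, show $C_\infty\subseteq\Gamma_{\btheta}(\bg,\ell)$, identify $C_\infty$ as a $\bigl((q^{\bg(i_m)-\bg(i_{m-1})})_m,(q^{i_m-i_{m-1}-1})_m\bigr)$ Cantor set, and apply Theorem~\ref{thm:CantorDim}. Your final paragraph gets the key point right: $\bb_m$ must have a nonzero entry among the \emph{new} row positions (those appearing in $\Delta[i_m,\cdot]$ but not in $\Delta[i_{m-1},\cdot]$), because $\rank(\Delta[i_{m-1},j_m])=i_{m-1}$ forces the restriction of $\bb_m$ to the old rows to vanish on a full-rank matrix.

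However, the middle of your write-up is inconsistent with this and should be fixed. You assert that $\bb_m$ has a nonzero new entry ``in each of the relevant coordinate blocks that actually grew'' and that ``in each growing block exactly a $q^{-1}$-fraction of extensions is discarded,'' leading to ``$-1$ per discarded block.'' That is false in general: all you know (and all you need) is that \emph{some} new coordinate of $\bb_m$ is nonzero, so the single linear condition $(\bb_m)^t\cdot\bv=0$ removes exactly $q^{i_m-i_{m-1}-1}$ of the $q^{\overline{\bg(i_m)-\bg(i_{m-1})}}=q^{i_m-i_{m-1}}$ extensions of each cylinder in $\calc_m$. This gives $\ell_m'=i_m-i_{m-1}-1$ (one ``$-1$'' total, not one per block) and the Cantor parameters stated above; with $\min\sum_{k\le m-1}(\bg(i_k)-\bg(i_{k-1}))=\min\bg(i_{m-1})$ and hypothesis~\eqref{eq:moreover2}, Theorem~\ref{thm:CantorDim} then yields~\eqref{eq:dimGamma} exactly as in the one-dimensional case. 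Also, the stray ``$\bb_m\in\bbf_q^{\min\bg(i_m)}$'' should read $\bbf_q^{i_m}$, as you yourself note in the next sentence.
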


\begin{proof}
Let $\calc_0 = \left\{ I^d \right\}$. In the same way as is in the proof of Proposition \ref{prop:mainProp} define for each $m \geq 1$ the sets $\calc_m$, vectors $\bb_m \in \bbf_q^{i_m}$ and the set $\calc_\infty$, but using the matrices \eqref{eq:matrixHigherDimension}, and projections $\pi_{\bg(i)}$ instead of $\pi_i$.\\ \\
\textit{Claim 1: }
	$C_\infty\subseteq\Gamma_{\btheta}\left(\bg,\ell\right)$.\\
The argument is the same as in Proposition \ref{prop:mainProp}, but with the two aforementioned changes.\\ \\
\textit{Claim 2: }
	If $j_{m+1} \neq \infty$ for all $m$, then $C_\infty$ is a $\left(\left(q^{\bg(i_{m})-\bg(i_{m-1})}\right)_{m=0}^\infty, \left(q^{i_{m}-i_{m-1}-1}\right)_{m=0}^{\infty}\right)$ Cantor set.\\
	An analysis like the one in Proposition \ref{prop:mainProp} gives, that for each $m \geq 0$, and for each $C \in \calc_m$, there are exactly $q^{i_{m}-i_{m-1}-1}$ vectors $v \in \pi_{\bg(i_m)}(C)$, for which $(\bb_m)^t\cdot v = 0$. From the construction of $\calc_{m+1}$ from $\calc_m$, we get the desired.\\ \\
\textit{Claim 3: }
	$\Gamma_{\btheta}\left(\bg,\ell\right)\neq\varnothing$.\\
If $j_{m+1} \neq \infty$ for all $m$, it follows since Cantor sets are non empty, that $C_\infty \neq \varnothing$, and hence $\Gamma_{\btheta}\left(\bg,\ell\right)\neq\varnothing$.

If there exists $m\geq0$ for which $j_{m+1}=\infty$, then $C_{\infty}$ is a non empty union of cylinders of length $\bg(i_m)$. Therefore it is non empty.\\ \\
\textit{Claim 4: }If	
\[
	\min \bg\left(i_m\right) \mto \infty,
\]
then
\[
\dim \left(C_{\infty}\right) \geq d - \limsup_{m\to\infty}\frac{m+1}{\bg(i_{m-1})}\frac{ \log {\frac {q}{q-1}}}{\log q}.
\]

If $j_{m+1} \neq \infty$ for all $m$, then since $\sum_{k=0}^{m-1} \bg(i_{k}) - \bg(i_{k-1}) = \bg(i_{m-1})$, the result follows from Theorem~\ref{thm:CantorDim}.

If there exists $m\geq0$ for which $j_{m+1}=\infty$, then $C_\infty$ has positive measure, and hence dimension $d$.
\end{proof}

\begin{proof}[Proof of Theorem \ref{thm:TsengAnalog}] Recall that we want to show that $\dim\left(BA_{\theta}(\bg)\right)=d$. Let $\ell>0$ be any integer.
By imitating the proof of Theorem \ref{thm:TsengAnalog1}, we get that $\Gamma_{\btheta}\left(\bg,\ell\right)\sub BA_{\btheta}\left(\bg,\ell\right)$.
By Proposition \ref{prop:mainProp2}, we get $\Gamma_{\btheta}\left(\bg,\ell\right)\neq\varnothing$, and hence, $BA_{\btheta}\left(\bg,\ell\right)\neq\varnothing$. To conclude the second part of the theorem, we assume that (\ref{eq:moreover1}) holds. Therefore, there exists $r>0$ such that $\min\bg(h)\geq rh$ for all $h$. By applying Lemma~\ref{lem:indConstruction}(\ref{eq:im}) $m$ times we see that for every $m\geq0$, $i_m\geq (m+1)\ell$. By the monotonicity of $g^s$ for all $1\leq s\leq d$, we thus obtain that
\[
\min \bg\left(i_m\right) \geq \min \bg\left((m+1)\ell\right) \geq r(m+1)\ell \mto \infty.
\]
Hence, condition (\ref{eq:moreover2}) is satisfied. As a consequence of Proposition~\ref{prop:mainProp2}, the inequality (\ref{eq:dimGamma}) also holds.

Finally,
\begin{align*}
\dim BA_{\btheta}\left(\bg,\ell\right)
& \geq \dim\left(\Gamma_{\btheta}\left(\bg,\ell\right)\right) \\
& \geq d - \limsup_{m\to\infty}\frac {m+1}{\min \bg\left(i_{m-1}\right)}\frac{ \log {\frac {q}{q-1}}}{\log q} \\
& \geq d - \frac {1}{r\ell}.
\end{align*}
As $\ell>0$ is arbitrary, by (\ref{eq:BAh}) we get
$$\dim BA_{\btheta}\left(\bg\right)=d.$$
\end{proof}

\begin{proof}[Proof of Theorem \ref{thm:CasAnalog}] We want to show that $\ch=q^{-2}$. As in the proof of Proposition~\ref{prop:CasLowerBound1}, we note that $\Gamma_{\btheta}\left(\bg,1\right)$ is not empty. Therefore, $\cht\geq q^{-2}$ for every $\btheta\in\R^d$. To show equality, it is enough to find one $\btheta$ for which $\cht=q^{-2}$. Let $1\leq s_1,s_2\leq d$ be such that $g^{s_1}(1)\neq0$ and $g^{s_2}(1)\neq g^{s_2}(2)$. If $s_1=s_2$, choose any $\btheta$ with $\theta^{s_1}_1=0$, $\theta^{s_1}_2=1$ and $\theta^{s_1}_3=0$. Otherwise, choose $\theta^{s_1}_1=0$, $\theta^{s_1}_2=1$, $\theta^{s_2}_1=1$ and $\theta^{s_2}_2=0$.
\end{proof}

\begin{appendices}

\subsection*{Acknowledgements}
We would like to thank Lior Bary-Soroker, Simon Kristensen and Barak Weiss for an abundance of remarks and support, and for suggesting that we work together on this project. We are indebt for the anonymous referee for carefully reading this note, helping us to greatly improve it. The time spent by the first and second named authors at ICERM semester program on "Dimension and Dynamics" had a significant role in the publication of this note. Part of this work was funded by ERC starter grant DLGAPS 279893, BSF grant 2010428, and by the Danish Natural Science Research Council.

\end{appendices}

\bibliographystyle{alpha}
\nocite{*}

\end{document}